\newcommand{\real}{{\rm Re}\,}
\newcommand{\imag}{{\rm Im}\,}
\newcommand{\C}{\mathbb C}
\newcommand{\N}{\mathbb N}
\newcommand{\R}{\mathbb R}
\newcommand{\dop}[1]{\frac{\partial}{\partial #1}}
\newcommand{\vnorm}[1]{{\| #1 \|}}
\newcommand{\GL}{{\mathsf{GL}}}
\newcommand{\diffable}[1]{{C}^{#1}}
\DeclareMathOperator{\spanc}{span}
\newtheorem{theorem}{Theorem}[section]
\newtheorem{lemma}[theorem]{Lemma}
\newtheorem{cor}[theorem]{Corollary}
\newtheorem{prop}[theorem]{Proposition}
\theoremstyle{remark}
\newtheorem{remark}[theorem]{Remark}
\theoremstyle{example}
\theoremstyle{definition}
\newtheorem{defi}[theorem]{Definition}
\numberwithin{equation}{section}
\DeclareMathOperator{\Aut}{Aut}
\begin{document}

\begin{abstract}
We prove finite jet determination for
(finitely) smooth CR diffeomorphisms
of (finitely) smooth Levi degenerate hypersurfaces in
$\mathbb{C}^{n+1}$ by constructing 
generalized stationary discs glued to 
such hypersurfaces. 
\end{abstract}

\author{Florian Bertrand, Giuseppe Della Sala and Bernhard Lamel}
\title[Jet determination and generalized stationary discs]{Jet determination of smooth CR automorphisms\\ and generalized stationary discs}

\subjclass[2010]{32H02, 32H12, 32V35}

\keywords{}
\thanks{Research of the first author was supported by an URB grant and a long-term
faculty development grant both from the American University of Beirut.}
\thanks{Research of the second author was supported by an URB grant from the American University of Beirut and by the Center for Advanced Mathematical Sciences.}
\thanks{Research of the third author was supported by the Austrian Science Fund FWF, project AI1776 and the Qatar National Research Fund, NPRP 7-511-1-098.}
\maketitle 

\section{Introduction}

Let $M, M'\subset \C^{n+1}$ be $\diffable{\ell}$-smooth hypersurfaces. We 
recall that the complex tangent space $T_p^c M$, for $p\in M$, is defined
by $T_p^c M = T_p M \cap i T_p M$, and is the largest complex subspace of
$\C^{n+1}$ contained in $T_p M$.  
A map $H\colon M\to M'$ (of class $\diffable{1}$) is said to 
be a CR map if $H'(p)|_{T_p^c M}$ maps $T_p^c M$ into 
$T_p^c M'$ and is complex linear. We will only be concerned with 
germs of CR maps which are also diffeomorphisms (of some regularity). 

CR maps possess strong rigidity properties. We are interested mostly in 
one particular aspect of this rigidity here, namely the 
finite determination property. In the setting where 
$M$ and $M'$ are real-analytic, and $H\colon M\to M'$ extends to a germ 
of a biholomorphic map or is given by a formal power series
 at a point $p\in M$, this is usually phrased 
in terms of the finite jet determination property, and we know 
that $H$ is determined by finitely many of its derivatives at 
a point $p\in M$ in many circumstances (we refer the reader to the paper of 
Baouendi, Mir, and Rotschild \cite{bmr} as well as to 
Juhlin's paper \cite{ju} and the discussion of the literature therein). 

Actually, in the real-analytic setting one knows quite a bit more: Not 
only are (formal) biholomorphisms between 
sufficiently nondegenerate hypersurfaces determined by their jets, 
they can actually be reconstructed from their jets in an analytic manner 
(see e.g. the survey \cite{lasurvey} and \cite{la-ju}). One of the 
appealing parts of such so-called jet parametrizations is that 
they provide insight into structural properties of the automorphism 
groups of manifolds. However, they depend 
on jets, and therefore pointwise information. If one tries 
to study CR diffeomorphisms which are a priori only smooth (of some
regularity), these methods are not applicable.

Our goal in this paper is to study such (finitely) smooth CR automorphisms of (finitely) smooth
hypersurfaces in $\C^{n+1}$. We shall assume 
that our hypersurfaces, in suitable coordinates $(z,w) \in \C^n \times \C$, pass 
through $0\in \C^{n+1}$ and that their defining functions are perturbations 
of a {\em finite type model hypersurface} $S_P$ of the
form
\[ \real w = P_d (z,\bar z), \]
where $P$ is a weighted homogeneous polynomial. To be more exact, we 
consider (sufficiently smooth) perturbations of $S_P$ 
given by 
\[ \real w = P_d (z, \bar z) + O (d+1),\] 
where we endow $z$ with the weight $1$ and $w$ with the weight $(2d+1)/ 2d$. We can now state our main theorem as follows.

\begin{theorem}\label{thm:main} Let $P$ be a weighted homogeneous polynomial such that $S_P$ is generically Levi-nondegenerate and 
the set of Levi-degenerate points containing $0$ has dimension at most $2n-1$. Then there exists an $\ell \in \mathbb{N}$ such 
that for any allowable perturbation $M$ of $S_P$ in any neighbourhood $U$ of $0$, every local CR diffeomorphism of class $C^\ell$ 
of $M$ is determined by its $\ell$-jet: If $H\colon M \to M$ and $\tilde H\colon M \to M$ are CR diffeomorphisms of class $C^\ell$ with
$j^\ell_0 H = j^\ell_0 \tilde H$, then $H= \tilde H$.
\end{theorem}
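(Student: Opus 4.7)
The overall plan is to construct a family of \emph{generalized stationary discs} attached to $M$---holomorphic maps $f:\Delta\to\C^{n+1}$ whose boundary lies in $M$ and which admit a meromorphic lift to the conormal bundle $N^*M$ with a prescribed pole structure adapted to the weighted homogeneous model $S_P$. For $M=S_P$, the weights and the generic Levi-nondegeneracy of $S_P$ single out a lift condition for which the associated Bishop-type Riemann--Hilbert problem is Fredholm of index zero; one then studies the resulting space of discs, shows that the ``boundary evaluation'' map $\mathrm{ev}:(f,\zeta)\mapsto f(\zeta)$ is a submersion near the base disc, and deduces that this family sweeps out a full neighbourhood of $0$ in $S_P$. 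For an allowable perturbation $M$ of $S_P$ one then transports the family via an implicit function theorem argument in an appropriate Banach space of $C^\ell$-discs, with $\ell$ chosen large enough to absorb the derivative losses inherent to the boundary problem; this produces, for each such $M$, a $C^\ell$-smooth family of generalized stationary discs parametrized by essentially the same data as in the model case.

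The second step is to show that any $C^\ell$ CR diffeomorphism $H:M\to M$ fixing $0$ sends generalized stationary discs through $0$ to generalized stationary discs through $0$. Since $M$ is of finite type, a $C^\ell$ CR diffeomorphism (for $\ell$ sufficiently large) possesses enough regularity that the composition $H\circ f$ extends as a holomorphic disc, and both the boundary condition $f(\partial\Delta)\subset M$ and the meromorphic lift to the conormal bundle are biholomorphic invariants; hence $H\circ f$ is again a generalized stationary disc. The Fredholm index-zero nature of the problem makes the generalized stationary disc through $0$ with prescribed low-order data essentially unique, so $H$ acts on the parameter space of such discs via a map prescribed by a finite jet of $H$ at $0$. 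Combined with the submersivity of the evaluation map, this shows that $H$ is determined on a neighbourhood of $0$ by its $\ell$-jet. To conclude the theorem one then applies this rigidity to $G:=\tilde H^{-1}\circ H$, whose $\ell$-jet at $0$ is trivial: the reconstruction forces $G=\mathrm{id}$, hence $H=\tilde H$.

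The main obstacle is the construction and analysis in the first step. The classical theory of stationary discs has been developed mostly under Levi-nondegeneracy, and one must choose the correct pole order for the conormal lift together with a function-theoretic framework that keeps the Riemann--Hilbert problem Fredholm of index zero in the presence of a Levi-degenerate locus. The hypothesis that this locus has real dimension at most $2n-1$ is what allows one to arrange the stationary discs so that their boundaries generically meet the Levi-nondegenerate part of $M$, keeping the submersion property of the evaluation map intact on a full open neighbourhood of $0$, and what ensures that the Levi-degenerate locus is thin enough that the equality $H=\tilde H$ extends by continuity from the Levi-nondegenerate part to all of $M$. Making the construction work uniformly over allowable perturbations, and fixing a single $\ell$ that simultaneously controls disc regularity, the implicit function theorem step, and the finite-type extension behaviour of CR diffeomorphisms, is where the bulk of the technical effort will be required.
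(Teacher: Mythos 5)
Your overall strategy coincides with the paper's: attach a finite-dimensional family of generalized stationary discs (the paper's $k_0$-stationary discs, defined by a lift to the twisted conormal bundle $\mathcal N^{k_0}S$), produce them by an implicit function theorem for the associated Riemann--Hilbert problem, uniformly over allowable perturbations after an anisotropic scaling, use the invariance of the family under CR diffeomorphisms, show the discs fill an open set, and conclude by applying the rigidity to $\tilde H^{-1}\circ H$. Two points in your mechanism, however, do not survive contact with the actual analysis. First, the Riemann--Hilbert problem here is \emph{not} Fredholm of index zero, and the disc through $0$ is not ``essentially unique'': the linearized operator is surjective with a kernel of positive dimension (bounded by $2(n+1)(k_0+1)+2nk_0-2dn$), so one obtains a genuinely positive-dimensional manifold of discs. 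The uniqueness you need is instead that a disc in this family is determined by its finite jet at the boundary point $\zeta=1$; the paper proves this (Lemma~\ref{lemjet}) from a Birkhoff factorization $-\overline{G_2^{-1}}G_2=\Theta_2^{-1}\Lambda\overline{\Theta_2}$, which forces the components of $\Theta_2\bm{f}$ to be polynomials of degree at most the partial indices. If you insist on an index-zero setup you will not be able to construct the family at all.

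Second, and more importantly, the central technical difficulty is that the twisted conormal bundle fails to be totally real exactly over the Levi-degenerate locus, and the base disc passes through such a point at $\zeta=1$; hence the linearized boundary matrix degenerates at $\zeta=1$ and the classical Globevnik--Forstneri\v{c} theory does not apply. Your proposal names this obstacle but supplies no idea for overcoming it. The paper's resolution is to work in the weighted spaces $\mathcal A^{k,\alpha}_{0^{m_i}}$ of discs vanishing to prescribed orders at $1$, to renormalize the linear system by dividing rows by powers of $(1-\zeta)$ and multiplying by powers of $\zeta$ (reducing to the constrained regular problems of the companion paper), and to impose the admissibility condition that the renormalized Levi determinant $Q^v$ is nonvanishing on all of $b\Delta$ \emph{including at} $\zeta=1$, i.e.\ the Levi form vanishes along the base disc to exactly the generic order. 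This is also where the hypothesis on the Levi-degenerate set enters: it guarantees (Lemma~\ref{lem:admissiblepoints}) the existence of such an admissible vector $v$. It is not used, as you suggest, to extend $H=\tilde H$ by continuity across the degenerate locus; once the discs cover an open set, no such continuity argument is needed.
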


In fact, Theorem~\ref{thm:main} holds 
under the less stringent (but more technical)
condition that ``there exists an allowable vector'' $v\in T_0^c S_P$; this 
condition is explained in Definition~\ref{defadm1}. In order to not 
duplicate formulations of theorems, we shall use the definition of an 
allowable vector as well as the condition 
that $M$ is an allowable deformation (which is 
discussed in Definition~\ref{defadm}); both conditions are geometric conditions
in a suitable sense to be defined below, in particular, they can be defined independently of coordinates.  We shall simply say 
{\em allowable hypersurface} from now on 
to indicate an allowable perturbation
based on a model hypersurface which 
possesses an allowable vector. 
Our main theorem then reads: 

\begin{theorem}\label{thm:main2}
Let $M$ be an allowable hypersurface. Then there exists an $\ell \in \mathbb{N}$
only depending on the associated 
model hypersurface such 
that every local CR diffeomorphism of class $C^\ell$ 
of $M$ is determined by its $\ell$-jet: If $H\colon M \to M$ and $\tilde H\colon M \to M$ are CR diffeomorphisms of class $C^\ell$ with
$j^\ell_0 H = j^\ell_0 \tilde H$, then $H= \tilde H$.
\end{theorem}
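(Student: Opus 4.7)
The plan is to prove Theorem~\ref{thm:main2} by exploiting the invariance of generalized stationary discs under CR diffeomorphisms, a strategy which bypasses the lack of real-analyticity by extracting global information from the $\ell$-jet through a boundary value problem. First I would construct, for each base point $p$ near $0 \in M$ and each allowable cotangent parameter, an analytic disc $f_{p,\lambda}\colon \Delta \to \C^{n+1}$ of class $C^\ell$ up to the boundary with $f(\partial\Delta)\subset M$ satisfying an appropriate generalized stationarity condition (i.e.\ admitting a holomorphic lift $\tilde f^*$ to the conormal bundle of $M$ vanishing to a prescribed order at some point of $\partial\Delta$). These discs arise as solutions of a Bishop-type Riemann--Hilbert problem, and the allowable vector/allowable perturbation hypotheses should be precisely what ensure that the linearization has a Birkhoff factorization with the correct partial indices, giving a family of solutions of the expected dimension depending smoothly on the parameters.

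Second, I would establish the invariance: if $H$ is a $C^\ell$ CR diffeomorphism of $M$, then $H\circ f_{p,\lambda}$ is again a generalized stationary disc attached to $M$ whose conormal lift is obtained from $\tilde f^*$ by the dual of $dH^{-1}$. This follows formally from the fact that CR diffeomorphisms preserve both the real structure $f(\partial\Delta)\subset M$ and the conormal bundle $\mathcal N^*_M$, so the stationarity condition is transported naturally.

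Third, and this is the heart of the argument, I would show that the $\ell$-jet of $H$ at $0$ determines $H$ along each generalized stationary disc passing through $0$. Writing $F = H\circ f$ and $\tilde F = \tilde H\circ f$ for a stationary disc $f$ with $f(1)=0$, both $F$ and $\tilde F$ satisfy the same Bishop equation and agree to sufficiently high order at $\zeta = 1$ (for both the disc and its conormal lift); a uniqueness statement for the Riemann--Hilbert problem with prescribed boundary jet then forces $F = \tilde F$. Composed with the fact that the evaluation map $(p,\lambda,\zeta)\mapsto f_{p,\lambda}(\zeta)$ covers a full neighbourhood of $0$ in $\C^{n+1}$ as the parameters vary, this yields $H=\tilde H$ on a neighbourhood of $0$.

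The main obstacle will be the construction and parameter-dependence of the generalized stationary discs in the Levi-\emph{degenerate}, finitely-smooth setting: the classical Lempert/Tumanov/Huang theory of stationary discs is tailored to the strictly pseudoconvex or Levi-nondegenerate case, where the relevant partial indices are automatic. Here, the degeneracy of $S_P$ forces one to replace the conormal vanishing condition $\tilde f^*(\zeta)\in \zeta\cdot \mathcal N^*_M$ by a higher-order analogue (accounting for the weight $(2d+1)/2d$ of $w$), and the allowable vector condition is what guarantees that the modified Bishop problem is solvable with a parameter space large enough to sweep out a neighbourhood. The control must be maintained under passage from $S_P$ to an allowable perturbation $M = S_P + O(d+1)$, which requires a careful implicit function theorem in finite-regularity (e.g.\ Hölder) function spaces, with the loss of derivatives absorbed into the final integer $\ell$.
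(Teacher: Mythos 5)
Your proposal follows essentially the same route as the paper: construction of $k_0$-stationary discs by a singular Riemann--Hilbert problem whose linearization is controlled via Birkhoff factorization and the allowable-vector condition, invariance of the disc family under CR diffeomorphisms via their lifts to the conormal bundle, determination of each disc (and hence of $H$ along it) by a finite boundary jet at $\zeta=1$, and a covering argument showing the discs sweep out a neighbourhood of $0$. The only ingredient you leave implicit is the anisotropic dilation $\Lambda_t$ used to place the perturbed hypersurface into the implicit-function-theorem neighbourhood of the model and to make $H_t=\Lambda_t^{-1}\circ H\circ\Lambda_t$ close to the identity, but this is subsumed in your remark about controlling the passage from $S_P$ to its perturbations.
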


The number $\ell$ depends on the specific form of $P$ and can, in essence, be computed given $P$; we shall
give upper bounds on $\ell$ later. However, an especially interesting aspect of the current paper is its
application to the problem of unique determination of smooth diffeomorphisms of smooth hypersurfaces, in 
which case one can use known jet determination results in the formal setting which already provide 
ways to compute $j^\ell_0 H$ from $j^{p_0} H$ for $\ell \geq p_0$. To be precise,
we need to introduce some notation. 
We define, for $\ell \in \mathbb{N} \cup \{ \infty, \omega \}$ and 
for a CR manifold $M$ of class $C^k$ for
$k\geq \ell$,  the spaces
\[ 
\Aut^\ell (M,p) = \{ H\colon M \to M \colon H \text{ is a germ at } p \text { of a  CR diffeomorphism of class } C^\ell\}, \]
and for a smooth CR manifold $M$, 
\[\Aut^{f} (M,p) = \{ H\colon M \to M \colon H \text{ is a formal  CR diffeomorphism  of } M\}.
 \]
Here the space of  formal CR diffeomorphisms of $M$
is defined to be the space of formal 
power series maps $H\colon \C^{n+1} \to 
\C^{n+1}$ which have the 
property that they are formal 
biholomorphisms of the associated 
formal manifold (given by the 
ideal generated by the Taylor series 
of the defining equations of $M$), i.e. for
one (and hence every) defining function
$\varrho$ of $M$ and 
for one (and hence every) local parametrisation $\mathbb{R}^{2n+1} \supset U \ni x \mapsto Z(x) \in M$ we have that
for any $\ell \in \mathbb{N}$  it 
holds that  
$\varrho(H(Z(x)), \overline{H(Z(x))}) = O(|x|^{\ell+1})$. 

In particular, by definition we 
have natural maps 
\[ j^k_p \colon \Aut^\ell (M,p) \to G^k_p (\C^{n+1}), \, k\leq \ell \text{ and } j^k_p  \colon 
\Aut^f (M,p) \to G^k_p (\C^{n+1}), \, k \in \mathbb{N},    \]
into the jet group of order $k$ of germs 
of biholomorphisms at $p$. We know 
that if $M$ is {\em formally holomorphically nondegenerate} and {\em formally minimal} then, by \cite{la-ju}, 
the map $j^k_p$ is injective for $k$ 
large enough. Every allowable smooth 
hypersurface is, as the reader can easily convince herself or himself, formally 
holomorphically nondegenerate and formally nonminimal. Therefore there exists a smallest  number $k_0 (M)$ such that
for $k\geq k_0(M)$, the map $
j^k_p  \colon 
\Aut^f (M,p) \to G^k_p (\C^{n+1})$
is injective; in particular, the $k$-jets
of smooth CR diffeomorphisms of $M$ are uniquely
determined by their $p_0 (M)$ jets. 

 Theorem \ref{thm:main2} therefore has the following immediate corollary. 

\begin{cor}\label{cor:main3}
Let $M$ be an allowable smooth hypersurface. 
Then there exists an $\ell_0 \in \mathbb{N}$ such that for $\ell \geq \ell_0$ the map $j^{k_0(M)}_p \colon \Aut^\ell (M,p) \to G^{k_0 (M)}_p (\C^{n+1}) $
is injective. Furthermore $\ell_0$ depends only on the associated model $S_P$. 
\end{cor}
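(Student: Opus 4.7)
The plan is to combine Theorem~\ref{thm:main2} with the formal finite jet parametrization of Juhlin--Lamel~\cite{la-ju}. Let $\ell_1 = \ell_1(S_P)$ denote the regularity threshold provided by Theorem~\ref{thm:main2}. Since an allowable smooth $M$ is formally holomorphically nondegenerate and formally minimal, \cite{la-ju} yields, for every multi-index $\alpha$ with $|\alpha|\le \ell_1$, an analytic map $\Phi_\alpha$ on $G^{k_0(M)}_p(\C^{n+1})$ such that
\[
\partial^\alpha F(p) \;=\; \Phi_\alpha\bigl(j^{k_0(M)}_p F\bigr)
\]
for every $F\in\Aut^f(M,p)$. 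These formulas are obtained by inductively solving a triangular system of equations extracted from the Taylor coefficients of $\varrho\circ(F,\bar F)$, and they use only derivatives of $\varrho\circ(F,\bar F)$ at $p$ of total order at most some $\ell^\ast=\ell^\ast(S_P)$.

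Set $\ell_0 := \max(\ell_1,\ell^\ast)$; this depends only on $S_P$. For $\ell\ge\ell_0$ and $H,\tilde H\in\Aut^\ell(M,p)$ with $j^{k_0(M)}_p H = j^{k_0(M)}_p \tilde H$, the key observation is that the identity $\varrho(H,\bar H) \equiv 0$ along $M$ can still be differentiated at $p$ to order $\ell\ge\ell^\ast$, producing exactly the same relations that were used to define the $\Phi_\alpha$ in the formal setting. Hence the displayed formulas apply verbatim with $F$ replaced by $H$ and by $\tilde H$, so matching $k_0(M)$-jets forces $j^{\ell_1}_p H = j^{\ell_1}_p \tilde H$. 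Theorem~\ref{thm:main2} then yields $H=\tilde H$. Injectivity of $j^{k_0(M)}_p$ on $\Aut^\ell(M,p)$ follows, and $\ell_0$ manifestly depends only on $S_P$.

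The main point that requires care is the transfer of the Juhlin--Lamel parametrization, originally phrased for $\Aut^f(M,p)$, to $\Aut^\ell(M,p)$. This is not a conceptual difficulty but a finite bookkeeping step: one traces the inductive construction of the $\Phi_\alpha$ for $|\alpha|\le\ell_1$, verifies that only derivatives of $\varrho\circ(F,\bar F)$ of bounded total order $\ell^\ast$ intervene, and then observes that for $H\in\Aut^\ell(M,p)$ with $\ell\ge\ell^\ast$ every such derivative of $\varrho\circ(H,\bar H)$ at $p$ is well defined and vanishes. Consequently the same triangular system and its polynomial/analytic solution carry over unchanged to the finitely smooth category, which is precisely what is needed to complete the argument above.
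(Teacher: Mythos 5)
Your argument is correct and follows essentially the same route as the paper, which declares the corollary ``immediate'' from Theorem~\ref{thm:main2} together with the formal jet determination/parametrization of \cite{la-ju}. The one point you rightly flag --- that the formal parametrization must be transferred to finitely smooth maps because a $C^\ell$ diffeomorphism has no full Taylor series --- is glossed over in the paper's statement of the corollary but is handled there by exactly the same finite-order bookkeeping observation (see the remark following Theorem~\ref{theo2}), so your write-up matches the intended proof while supplying the missing detail.
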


We would like to point out that 
Corollary~\ref{cor:main3} seems 
to be the first case of a 
jet determination result for {\em (finitely) smooth}
CR diffeomorphisms aside from the finitely nondegenerate case. The jet determination problem for real-analytic CR diffeomorphisms of real-analytic CR manifolds has been studied widely, see e.g.
  \cite{ch-mo, eb-la-za, la-mi, ko-me}. In 
  the smooth case, results have been restricted to the setting of {\em finitely nondegenerate} hypersurfaces 
  (see e.g. \cite{eb, eb-la, ki-za}). 
  Our approach is most akin to the use
  of extremal discs by Huang \cite{hu,hu3}. Let us give an outline of our approach. 

When studying the automorphisms of a geometric structure, 
it is often convenient to extend the action of these automorphisms 
to spaces of invariant objects, and study the transformation
properties of these invariant objects. In the study of real-analytic
CR manifolds,
a  suitable family of associated objects
is the family of Segre varieties. However,
these have the drawback that they really can only be defined for real-analytic or formal CR manifolds and thus becomes unavailable in the setting
of smooth CR manifolds. 

Our approach in this paper is to construct
another family of associated invariant
objects, namely generalized stationary discs, which we refer to as 
$k$-stationary discs. We  show that for allowable hypersurfaces in $\mathbb{C}^{n+1}$, one can invariantly
attach a finite-dimensional family of generalized stationary discs. 

This approach has been pioneered 
by the first and 
the second author for hypersurfaces in $\mathbb{C}^2$ in \cite{be-de1}, generalizing the 
notion  L. Lempert \cite{le} used in his study of the Kobayashi 
metric on strictly convex domains (see also \cite{hu, tu}). These classical stationary discs are special analytic discs, attached to hypersurfaces $M$ of $\mathbb C^{n+1}$, which admit a lift (with a pole of order at most $1$ at $0$) to the conormal bundle of $M$. The conormal bundle can be seen as a real $2n+2$-dimensional submanifod of $\mathbb C^{2n+2}$ and, as it turns out, it is totally real if $M$ is Levi-nondegenerate \cite{we}.   Consequently, if $M$ is Levi-nondegenerate the study of stationary discs falls into the framework developed in \cite{gl1, gl2,fo}, and indeed the first author and L. Blanc-Centi employed this method to construct stationary discs  in \cite{be-bl}, and used it to show finite determination of automorphisms.

If the Levi form degenerates at some points, the conormal bundle admits complex tangencies, and therefore the 
attachment of discs is more complicated.
We shall overcome this difficulty by 
constructing an associated 
circle bundle $\mathcal N^k S_P$ (a 
bundle over $S^1 \times S_P$ whose fiber at $(\zeta, p)$ is $\zeta^k N_p S_P$) whose CR 
singularities allows for attaching 
discs which pass through the singularity
with certain predescribed orders. 
Geometrically, one can think of this 
construction as allowing 
a higher winding of the conormal part of  the disc ($k$ instead of $1$
in the case of a classical stationary disc). Our theorem on the existence of discs is now as follows. 

 \begin{theorem} 
If $M$ is an admissible hypersurface, then there exists a $k_0 \in \mathbb{N}$
and a finite dimensional manifold of 
(small) $k_0$-stationary discs attached to M.
 \end{theorem}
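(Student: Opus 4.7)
The plan is to set up the existence of $k_0$-stationary discs as a nonlinear Riemann--Hilbert problem with boundary data in the circle bundle $\mathcal N^{k_0} S_P$, and then to linearize this problem at an explicitly chosen family of model discs. First I would analyze the model hypersurface $S_P$ directly: exploiting the weighted homogeneity of $P$ and the explicit form of the conormal bundle, I would produce a finite-dimensional collection of analytic discs $f_v \colon \overline{\mathbb{D}} \to \mathbb{C}^{n+1}$ parametrized by vectors $v \in T_0^c S_P$ (in particular by the allowable vector of Definition~\ref{defadm1}) whose canonical lifts to the conormal bundle acquire precisely a pole of order $k_0$ at the origin, where $k_0$ is determined by the weight $d$ of $P$. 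These serve as central $k_0$-stationary discs which the implicit function theorem will subsequently deform.

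Next I would linearize the Riemann--Hilbert condition at each such model disc $f_v$. Differentiating the attachment condition for $\mathcal N^{k_0} S_P$ along the boundary $S^1$ produces a linear Riemann--Hilbert problem for bounded holomorphic maps into $\mathbb{C}^{2n+2}$ whose boundary values lie in a prescribed distribution of real $(2n+2)$-planes, encoded by a matrix-valued map $G \colon S^1 \to \GL(2n+2,\C)$. The twisting by $\zeta^{k_0}$ is precisely what compensates for the CR singularity of the conormal bundle at the Levi-degenerate points: it converts the naive obstruction into a controllable shift of partial indices. I would then compute the Birkhoff factorization of $G$ explicitly in the model situation and use the allowable vector condition to exclude negative partial indices, obtaining a linearized solution space of a definite finite dimension and surjectivity of the linearized operator.

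With nondegeneracy of the linearization established, I would apply the implicit function theorem in a Banach scale of discs of class $\diffable{m,\alpha}$, following the general Riemann--Hilbert framework of Globevnik and Forstneric and its adaptation by the first author and Blanc-Centi to the Levi-nondegenerate stationary disc problem. This produces a finite-dimensional manifold of nearby $k_0$-stationary discs attached to $S_P$. Since the boundary condition depends differentiably on the defining function, a further implicit function theorem argument in the perturbation parameter propagates the family to every allowable perturbation $M$ of $S_P$, with dimension and regularity controlled by the model.

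The main obstacle, I expect, is the linearization step: the conormal bundle $N S_P$ genuinely fails to be totally real along the locus of Levi degeneracy, so the boundary-value loop $G$ inherits true CR singularities on $S^1$. The whole point of passing from $1$-stationary to $k_0$-stationary discs is to absorb these singularities into the winding number $k_0$, and verifying that the appropriate choice of $k_0$ indeed produces nonnegative partial indices — and a total index matching the expected geometric dimension of the family — requires a careful computation intimately tied to the weighted structure of $P$ and to the existence of the allowable vector $v$. Once this index count is pinned down, extending the construction to allowable perturbations should be comparatively routine.
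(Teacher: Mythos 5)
Your overall strategy --- explicit model discs built from an allowable vector, linearization of the attachment condition to $\mathcal N^{k_0}S$, a partial-index analysis, and an implicit function theorem --- is the same as the paper's. But there is a genuine gap at the linearization step. You propose to ``compute the Birkhoff factorization of $G$'' and read off the partial indices; this cannot be done directly, because the linearized boundary matrix is \emph{not} invertible at $\zeta=1$: the model disc passes through the Levi-degenerate point $0$ exactly at $\zeta=1$, so the block coming from the Levi form carries factors $(1-\zeta)^{d-m_i-m_j}$ and $G_1(1)$ is singular. The $\zeta^{k_0}$ twist does not cure this --- its role is only to make the conormal lift of the model disc holomorphic (it absorbs the $\bar\zeta$-degree of $P_z(h^v,\overline{h^v})$ on $b\Delta$), not to regularize the degeneracy at $\zeta=1$. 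The paper's resolution, which is the technical heart of the construction and is absent from your plan, is to set the problem up in the weighted spaces $\mathcal A^{k,\alpha}_{0^m}$ and $\mathcal C^{k,\alpha}_{0^m}$, divide the rows of the linearized system by the appropriate powers of $(1-\zeta)$, and multiply by $\zeta^{s_\ell}$ with $s_\ell=(d-m_\ell)/2$ (Lemma \ref{subspaces}); this converts the singular problem into a \emph{regular} Riemann--Hilbert problem with pointwise constraints of the type treated in \cite{be-de2}, to which the Birkhoff/partial-index machinery then applies. Even after this reduction, ruling out partial indices $\leq -1$ is not a formal consequence of admissibility: it hinges on the observation that each $Q_{i\overline j}$ is divisible by $\zeta^{m_j}$, so that $Q'_{i\overline j}=Q_{i\overline j}\zeta^{-m_j}$ is holomorphic and a holomorphic--antiholomorphic splitting argument forces the last row of the factorizing matrix $\Theta$ to vanish unless $\kappa_{2n}\geq 0$.

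A second, smaller omission: the theorem concerns an arbitrary admissible hypersurface, whose higher-order terms need not be small, while the implicit function theorem only produces discs for defining functions in a neighborhood $V$ of the model $\rho$ in the deformation space $X$. The paper bridges this with the anisotropic scaling $\Lambda_t(z,w)=(t^{m_1}z_1,\dots,t^{m_n}z_n,t^dw)$ of Lemma \ref{lemdil}, which pulls any admissible defining function into $V$ for $t$ small; your ``further implicit function theorem argument in the perturbation parameter'' does not by itself reach hypersurfaces that are not already close to the model.
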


Our approach is based on the Riemann-Hilbert problem which as we 
already pointed out is singular in our situation. The approach described above 
will allow that
the problem can be studied with tools of \cite{be-de2}. 

We note that  for $n=2$, we recover 
the results in \cite{be-de1}. However let us stress that we cannot
generalize  the methods used in \cite{be-de1}, where the results
 are achieved by using a rather \lq\lq ad hoc\rq\rq\ procedure. So 
 in this paper we develop  methods which allow to treat the more
 general setting,  and which we in addition believe to be more  geometric.
 
The paper is organized as follows. In Section 2, we describe the needed preliminaries. Section 3 is devoted to weighted homogeneous model hypersurfaces. In Section 4, we study the existence of generalized stationary discs attached to admissible hypersurfaces. Finally, Section 5 is devoted to the proof of  the finite jet determination theorems for CR diffeomorphisms.

\section{Preliminaries} In this section, we collect some 
standard notation and collect facts which we will need throughout the paper. 
We denote by $\Delta$ the  unit disc in $\C$ and by $b\Delta$ its boundary. We use coordinates
 $(z,w) \in \C^{n+1}$, where  $z=(z_1,\ldots, z_n)$ are the standard coordinates in $\C^{n}$. 

\subsection{Function spaces}

 Let  $k$ be an integer and let $0< \alpha<1$.
 We write $\mathcal C^{k,\alpha}=\mathcal C^{k,\alpha}(b\Delta,\R)$ for the space of real-valued functions  defined on $b\Delta$ of class 
$C^{k,\alpha}$. We equip the space $\mathcal C^{k,\alpha}$ with its usual norm
$$\vnorm{ v }_{\mathcal{C}^{k,\alpha}}=\sum_{j=0}^{k}\vnorm{ v^{(j)} }_\infty+
\underset{\zeta\not=\eta\in b\Delta}{\mathrm{sup}}\frac{\|v^{(k)}(\zeta)-v^{(k)}(\eta)\|}{|\zeta-\eta|^\alpha}$$
where $\vnorm{ v^{(j)} }_\infty=\underset{b\Delta}{\mathrm{max}}\vnorm{ v^{(j)} }$.

We define $\mathcal C_\C^{k,\alpha} = \mathcal C^{k,\alpha} + i\mathcal C^{k,\alpha} = \mathcal C^{k,\alpha}(b\Delta,\C)$. Therefore $v\in \mathcal C_\C^{k,\alpha}$ if and only if 
$\real v, \imag v \in \mathcal C^{k,\alpha}$. We endow 
the space $\mathcal C_\C^{k,\alpha}$  with the norm
$$\vnorm{ v }_{\mathcal{C}_{\C}^{k,\alpha}}=
\|\real v\|_{\mathcal{C}^{k,\alpha}}+\|\imag v\|_{\mathcal{C}^{k,\alpha}}.$$ 
We denote by $\mathcal A^{k,\alpha}$ the subspace of $\mathcal C_{\C}^{k,\alpha}$  of  functions $f$ which possess a continuous extension
 $F:\overline{\Delta}\rightarrow \C$, with $F$ 
 holomorphic on $\Delta$.

Let $m$ be an integer. We denote by $\mathcal A^{k,\alpha}_{0^m}$
the subspace of 
$\mathcal C_{\C}^{k,\alpha}$ of functions that can be written as $(1-\zeta)^m f$, with  $f\in \mathcal A^{k,\alpha}$. Note that since $\mathcal A^{k,\alpha}_{0^m}$ is not a closed subspace of $\mathcal C_{\mathbb C}^{k,\alpha}$, it is not a Banach space with the induced norm. Instead, we equip $\mathcal A^{k,\alpha}_{0^m}$ with the following norm 
\begin{equation}\label{eqnorm}
\|(1-\zeta)^m f\|_{\mathcal A^{k,\alpha}_{0^m}}
=\vnorm{ f }_{\mathcal{C}_{\C}^{k,\alpha}}
\end{equation}
which makes it  a Banach space, isomorphic to $\mathcal A^{k,\alpha}$.  Note that the inclusion of $\mathcal A^{k,\alpha}_{0^m}$ in $\mathcal A^{k,\alpha}$ is a bounded operator.

Finally, we denote by $\mathcal C_{0^m}^{k,\alpha}$ the subspace of $\mathcal C^{k,\alpha}$ of functions that can be written as $(1-\zeta)^m v$ with $v\in \mathcal C_\C^{k,\alpha}$. The space $\mathcal C_{0^m}^{k,\alpha}$ is equipped with the norm
$$\|(1-\zeta)^m f\|_{\mathcal C_{0^m}^{k,\alpha}}=\vnorm{ f }_{\mathcal C_\C^{k,\alpha}}.$$
Notice that $\mathcal C_{0^m}^{k,\alpha}$ is a Banach space. Denote by $\tau_m$ the map $\mathcal C_{0^m}^{k,\alpha} \to \mathcal C_{\mathbb C}^{k,\alpha}$ given by $\tau_m ((1 - \zeta)^m v ) = v$.
We recall the following lemma from \cite{be-de2}:

\begin{lemma}\label{subspaces}  Define the closed subspace $\mathcal R_m$ of $\mathcal C_{\mathbb C}^{k,\alpha}$ by 
$$\mathcal R_m = \{v \in \mathcal C_{\mathbb C}^{k,\alpha} \ | \ v(\zeta) = (-1)^m \zeta^{-m} \overline {v(\zeta)} \ \forall \  \zeta \in b\Delta \}.$$ Then
\begin{compactenum}[\rm (i.)] 
\item $\tau_m$ maps $\mathcal C_{0^m}^{k,\alpha}$ isomorphically to $\mathcal R_m$;
\item if $m = 2m'$ is even, the map $v \mapsto \zeta^{m'}v$ induces an isomorphism between $\mathcal R_m$ and $\mathcal R_0 = \mathcal C^{k,\alpha}$;
\item if $m = 2m'+1$ is odd, the map $v \mapsto \zeta^{m'}v$ induces an isomorphism between $\mathcal R_m$ and $\mathcal R_1$.
\end{compactenum}
\end{lemma}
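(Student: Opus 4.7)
The plan is to verify each of the three statements by direct computation, exploiting the fundamental identity $\overline{\zeta} = \zeta^{-1}$ on $b\Delta$, which in particular yields $(1-\overline{\zeta})^m = (-1)^m \zeta^{-m}(1-\zeta)^m$ for $\zeta \in b\Delta$.

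For (i), I would start with an $f = (1-\zeta)^m v \in \mathcal C^{k,\alpha}_{0^m}$; since $f$ is real-valued, one has $(1-\zeta)^m v = \overline f = (1-\overline\zeta)^m\, \overline v = (-1)^m \zeta^{-m}(1-\zeta)^m \overline v$ on $b\Delta$. Dividing by $(1-\zeta)^m$ (valid away from $\zeta = 1$, and extended to $\zeta = 1$ by continuity of $v$) yields $v = (-1)^m \zeta^{-m}\overline v$, i.e.\ $v \in \mathcal R_m$. Conversely, multiplying this symmetry relation by $(1-\zeta)^m$ shows that every $v \in \mathcal R_m$ produces a real-valued $(1-\zeta)^m v$, so $\tau_m$ is a bijection onto $\mathcal R_m$. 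The identity $\|(1-\zeta)^m v\|_{\mathcal C^{k,\alpha}_{0^m}} = \|v\|_{\mathcal C^{k,\alpha}_\C}$ is then the definition of the norm on $\mathcal C^{k,\alpha}_{0^m}$, so $\tau_m$ is an isometric isomorphism.

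For (ii) and (iii), I would use that multiplication by the unimodular smooth function $\zeta^{m'}$ (resp.\ $\zeta^{-m'}$) is a bounded isomorphism of $\mathcal C^{k,\alpha}_\C$, so it suffices to verify on the symmetries. If $m = 2m'$ and $v \in \mathcal R_m$, then $\overline v = \zeta^{2m'} v$, hence $\overline{\zeta^{m'} v} = \zeta^{-m'}\overline v = \zeta^{m'} v$, showing $\zeta^{m'} v \in \mathcal R_0 = \mathcal C^{k,\alpha}$. If $m = 2m'+1$ and $v \in \mathcal R_m$, then $\overline v = -\zeta^{2m'+1} v$, hence $\overline{\zeta^{m'} v} = \zeta^{-m'}\overline v = -\zeta^{m'+1} v = -\zeta \cdot (\zeta^{m'} v)$, showing $\zeta^{m'} v \in \mathcal R_1$. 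In both cases the inverse is multiplication by $\zeta^{-m'}$, and the analogous computation shows it lands back in $\mathcal R_m$.

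There is no substantial obstacle here: the lemma is essentially bookkeeping, and the only minor technical point is handling the vanishing of $(1-\zeta)^m$ at $\zeta = 1$ in step (i), which is dealt with by continuous extension.
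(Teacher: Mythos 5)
Your proof is correct. Note that the paper itself does not prove this lemma at all: it is quoted verbatim from the companion preprint \cite{be-de2} (``We recall the following lemma from \cite{be-de2}''), so there is no in-paper argument to compare against. Your direct verification is exactly the expected one: the whole content is the identity $(1-\overline\zeta)^m=(-1)^m\zeta^{-m}(1-\zeta)^m$ on $b\Delta$, from which (i) follows by equating $f=\overline f$ and cancelling $(1-\zeta)^m$ (you correctly handle the zero at $\zeta=1$ by continuity, which also shows $\tau_m$ is well defined), and (ii)--(iii) follow by conjugating $\zeta^{m'}v$ and using $\overline\zeta=\zeta^{-1}$. The only cosmetic omission is that you do not remark why $\mathcal R_m$ is closed in $\mathcal C^{k,\alpha}_{\C}$ (the defining relation is preserved under convergence in that norm) nor why multiplication by the unimodular polynomial $\zeta^{m'}$ and its inverse $\overline\zeta^{\,m'}$ are bounded on $\mathcal C^{k,\alpha}_{\C}$, but both are immediate and you implicitly invoke the latter. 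No gaps.
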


\subsection{Partial indices and Maslov index}\label{secbirk}
We denote by $\GL_N(\C)$ the general linear group on $\C^N$. 
Let $G: b\Delta \to  \GL_N(\C)$ be a smooth map. 
We consider a Birkhoff factorization 
(see \cite{ve}) of $-\overline{G}^{-1}G$:
$$ -\overline{G(\zeta)}^{-1}G(\zeta)=
B^+(\zeta)
\begin{pmatrix}
	\zeta^{\kappa_1}& & & (0) \\ &\zeta^{\kappa_2} & & \\ & & \ddots & \\ (0)& & &\zeta^{\kappa_{N}}
\end{pmatrix}
B^-(\zeta)\, \ \ \text{ for all } \zeta\in b\Delta$$
where  $B^+\colon\bar{\Delta}\to \GL_N(\C)$ and 
$B^-\colon(\C \cup \infty)\setminus\Delta\to \GL_N(\C)$  
 are  smooth  maps, holomorphic on $\Delta$ and $\C \setminus \overline{\Delta}$ respectively. 
The integers $\kappa_1, \dots, \kappa_N$  are called the {\it partial indices} of 
$-\overline{G}^{-1}G$ and  their sum 
$\kappa:=\sum_{j=1}^N\kappa_j$ is called 
the {\it Maslov index} of $-\overline{G}^{-1}G$ and it is equal to 
the winding number  of the map $\zeta \mapsto \det\left(-\overline{G(\zeta)}^{-1}G(\zeta)\right)$ around the origin.

\subsection{$k_0$-stationary discs}
Let $S=\{r=0\}$ be a finitely smooth hypersurface defined in a neighborhood of the origin in 
$\C^{n+1}$. Let $k, k_0$ be integers and let $0<\alpha<1$. 
We recall that a holomorphic disc 
$f \in (\mathcal A^{k,\alpha})^{n+1}$ 
is {\it attached} to  $S$ if
$f(\zeta) \in S$ for all $\zeta \in b\Delta$. 
The following definition was given in  \cite{be-de1}:  
\begin{defi}\label{defstat}
A holomorphic disc $f \in (\mathcal A^{k,\alpha})^{n+1}$ attached to $S=\{r=0\}$ is said to be
 $k_0$-stationary if there exists a  continuous function $c\colon b\Delta
 \to\R\setminus\{0\}$ such that the map $\zeta \mapsto \zeta^{k_0} c(\zeta)\partial r(f(\zeta))$, defined on $b \Delta$, extends as a map in $(\mathcal A^{k,\alpha})^{n+1}$.
\end{defi}
The set of such discs is invariant under  CR diffeomorphisms.
\begin{prop}\label{statinvar}
Let $S \subset \C^{n+1}$ be a finitely smooth real hypersurface containing $0$. There exists a neighborhood $U$ of the origin in $\C^{n+1}$ such that if $H$ is a CR diffeomorphism of class $\mathcal{C}^{k+1}$ sending $S\cap U$ to a real hypersurface $S' \subset \C^{n+1}$ and $f\colon \Delta \rightarrow U$ is a $k_0$-stationary disc in $(\mathcal A^{k,\alpha})^{n+1}$ attached to $S$ then the disc $H\circ f$ extends as a  $k_0$-stationary disc in $(\mathcal A^{k,\alpha})^{n+1}$ attached to $S'$. 
\end{prop}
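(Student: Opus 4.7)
The plan is in two parts: first extend $H\circ f|_{b\Delta}$ to a holomorphic disc $F \in (\mathcal A^{k,\alpha})^{n+1}$, and then verify that $F$ is $k_0$-stationary attached to $S'$.

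For the extension step, I would invoke the Baouendi--Tr\`eves approximation theorem. Each component of $H$ is a $C^{k+1}$ CR function on $S \cap U'$ for a small enough neighborhood $U'$ of the origin, so there exists a sequence of entire holomorphic maps $P_j \colon \C^{n+1} \to \C^{n+1}$ with $P_j|_{S \cap U'} \to H|_{S\cap U'}$ in $C^{k+1}$. After shrinking $U$ so that $f(\bar\Delta) \subset U'$, the holomorphic discs $P_j \circ f \in (\mathcal A^{k,\alpha})^{n+1}$ have boundary values converging to $(H\circ f)|_{b\Delta}$ in $(\mathcal C_{\C}^{k,\alpha})^{n+1}$. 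The Cauchy integral formula propagates this boundary convergence, producing a limit disc $F \in (\mathcal A^{k,\alpha})^{n+1}$ with $F|_{b\Delta} = (H\circ f)|_{b\Delta}$; it is attached to $S'$ since $H(S \cap U')\subset S'$.

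For the $k_0$-stationarity of $F$, I would use a smooth extension $\tilde H$ of $H$ to $U'$; then $r'\circ \tilde H = \mu \cdot r$ for some real, smooth, non-vanishing $\mu$. Choosing $\tilde H$ to be almost-holomorphic (so that $\tilde H_{\bar z}$ vanishes to high order on $S$), differentiating in $z$ and restricting to $S$ yields
\begin{equation*}
\partial r'(H)\cdot \tilde H_z = \mu\,\partial r \quad \text{on } S \cap U'.
\end{equation*}
Set $c'(\zeta) := c(\zeta)/\mu(f(\zeta))$, which is continuous, real, and non-vanishing on $b\Delta$ since $\mu|_S$ is so and $f(b\Delta)\subset S$. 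Evaluating the above identity along $f$ and multiplying through by $\zeta^{k_0} c(\zeta)/\mu(f(\zeta))$ gives
\begin{equation*}
\zeta^{k_0} c'(\zeta)\,\partial r'(F(\zeta))\cdot \tilde H_z(f(\zeta)) = \zeta^{k_0} c(\zeta)\,\partial r(f(\zeta)), \qquad \zeta \in b\Delta.
\end{equation*}
The right-hand side extends holomorphically to $\Delta$ by the $k_0$-stationarity of $f$. Applying Baouendi--Tr\`eves again to $\tilde H_z$ (equivalently, taking the limit of the entire $(P_j)_z \circ f$), the factor $\tilde H_z(f(\zeta))$ itself is the boundary value of a holomorphic matrix on $\Delta$, invertible near $0$ because $H$ is a diffeomorphism. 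Inverting gives $\zeta^{k_0} c'(\zeta)\,\partial r'(F(\zeta)) \in (\mathcal A^{k,\alpha})^{n+1}$, which is exactly $k_0$-stationarity for $S'$.

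The main obstacle is the tension between the CR-but-not-holomorphic nature of $H$ and the holomorphy demanded by the stationarity condition. A direct chain rule is unavailable, and the extension $\tilde H$ does not carry a side of $S$ to a side of $S'$, so $F$ must be constructed by an approximation mechanism rather than by pointwise composition of $H$ with $f$ on the interior. The double use of Baouendi--Tr\`eves approximation---both to define $F$ and to show that the intervening factor $\tilde H_z \circ f$ admits a holomorphic extension to $\Delta$---is the cleanest route around this; the almost-holomorphic choice of $\tilde H$ ensures that the residual $\overline{\partial r'(H)}\cdot \overline{\tilde H_{\bar z}}$ cross-term in the full $\partial_z$-identity vanishes on $S$, so no non-holomorphic contribution survives along $f(b\Delta)$.
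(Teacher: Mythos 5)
Your overall strategy is sound and is essentially the one the paper follows, with both halves implemented differently. For the extension step the paper invokes Theorem 6.2.2 of \cite{ber} to produce a holomorphic extension $\widetilde H$ of $H$ to the open set swept out by all analytic discs attached to $S$, and then simply composes with $f$; your direct use of Baouendi--Tr\`eves to realize $H\circ f|_{b\Delta}$ as the limit of the holomorphic discs $P_j\circ f$ is a legitimate unwinding of the same mechanism (closedness of $\mathcal A^{k,\alpha}$ in $\mathcal C^{k,\alpha}_{\C}$ then identifies the limit as an element of $(\mathcal A^{k,\alpha})^{n+1}$), and it has the mild advantage of never needing the extension on an open set. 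For the stationarity computation the paper says nothing beyond deferring to Proposition 2.5 of \cite{be-de1}; your chain-rule identity $\partial r'(H)\cdot \tilde H_z=\mu\,\partial r$ on $S$, obtained from $r'\circ\tilde H=\mu\, r$ with an extension satisfying $\tilde H_{\bar z}|_S=0$, is exactly that computation, and the bookkeeping with $c'=c/(\mu\circ f)$ is correct.

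There is, however, one step you need to justify more carefully: inverting the factor $\tilde H_z(f(\zeta))$. Let $G$ denote the holomorphic extension to $\Delta$ of $\tilde H_z\circ f|_{b\Delta}$ (which exists, as you say, because $\tilde H_z|_S$ is a matrix of continuous CR functions, e.g.\ as the limit of $(P_j)_z|_S$). The hypothesis that $H$ is a diffeomorphism gives invertibility of $G(\zeta)$ only for $\zeta\in b\Delta$; a holomorphic matrix whose determinant is non-vanishing on $b\Delta$ can perfectly well be singular inside $\Delta$ (already $\det G(\zeta)=\zeta$ shows this), in which case $G^{-1}$ has poles and the conclusion $\zeta^{k_0}c'\,\partial r'(F)\in(\mathcal A^{k,\alpha})^{n+1}$ does not follow. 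So ``invertible near $0$'' is not enough as stated. The gap is easy to close, and this is precisely where the clause ``there exists a neighborhood $U$'' in the statement earns its keep: shrink $U$ so that $\|\tilde H_z(p)-\tilde H_z(0)\|$ is small compared with $\|\tilde H_z(0)^{-1}\|^{-1}$ for all $p\in S\cap U$; since $G-\tilde H_z(0)$ is holomorphic with small boundary values, the maximum principle makes it small on all of $\overline{\Delta}$, so $\det G$ is bounded away from zero on $\overline{\Delta}$ and $G^{-1}$ is holomorphic up to the boundary. (Alternatively: $\det G|_{b\Delta}$ has winding number zero because the loop $\zeta\mapsto f(\zeta)$ is contractible in $S\cap U$, and the argument principle then excludes interior zeros of $\det G$.) With that addition your argument is complete.
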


\begin{proof}
Using Theorem 6.2.2 in \cite{ber}, we write $W=\bigcup \varphi \left(\overline{\Delta}\right)$ where the union is taken over all analytic discs $f$ attached to $S$. The CR diffeomorphism $H$ of class $\mathcal C^{k+1}$ admits a local holomorphic extension 
$\widetilde H$ to $W$ continuous up to $S \cap W$. 
The image of any 
 $k_0$-stationary disc $f\in (\mathcal A^{k,\alpha})^{n+1}$ attached to $S$  is contained in $W$. The map 
$H\circ f \in \mathcal C_\C^{k,\alpha}$ defined on
$\partial \Delta$ therefore 
extends as $\widetilde H\circ f\in (\mathcal A^{k,\alpha})^{n+1}$. 
The rest of the proof is the same computation as given in  
\cite[Proposition 2.5]{be-de1}. 
\end{proof}
In our context, the following geometric version of Definition \ref{defstat} is more convenient to work with. 
\begin{defi}\label{defstat2}
A holomorphic disc $f \in (\mathcal A^{k,\alpha})^{n+1}$ attached to  $S=\{r=0\}$ is $k_0$-stationary if there 
exists a  holomorphic lift $\bm{f}=(f,\tilde{f})$ of $f$ to the cotangent bundle $T^*\C^{n+1}$, continuous up to 
the boundary and such that for all $\zeta \in b\Delta,\ \bm{f}(\zeta)\in\mathcal{N}^{k_0}S(\zeta)$
where
\begin{equation}\label{eqcon}
\mathcal{N}^{k_0}S(\zeta):=\{(z,w,\tilde{z},\tilde{w}) \in T^*\C^{n+1} \ | \ (z,w) \in S, (\tilde{z},\tilde{w}) \in 
\zeta^{k_0}N^*_z S\setminus \{0\} \},
\end{equation}
and where $N^*_z S=\spanc_{\R}\{\partial r(z)\}$ is the conormal fiber at $z$ of the hypersurface $S$. 
\end{defi}

Indeed, one can consider $k_0$-stationary discs as sections of the
circle bundle $\mathcal{N}^{k_0} S = \{(\zeta, \xi) \colon 
\xi\in \mathcal{N}^{k_0}S(\zeta)\} \subset S^1 \times \C^{2n+2}$. For 
a Levi-nondegenerate hypersurface $S$, this turns out to 
be totally real. 

We are interested in constructing $k_0$-stationary discs for Levi-degenerate hypersurfaces. Notice that in 
such a situation, the submanifold $\mathcal{N}^{k_0}S(\zeta)$ is not totally real for all $\zeta\in b\Delta$. 
In fact we are precisely interested in discs passing through the degeneracy locus of 
$\mathcal{N}^{k_0}S$. For this purpose, we will restrict our attention to discs which satisfy certain pointwise constraints.

\section{The model situation}\label{sec:models}

\subsection{Weighted polynomial models}\label{polmod}

A (real) polynomial $P:\C^n \to \C$ is {\it weighted homogeneous of weight $M=(m_1,\cdots,m_n) \in 
\N^n$ and (weighted) degree $d \in \N$} if for any real number $t$ and $z\in \C^n$  we have
$$P(t^{m_1}z_1,\cdots,t^{m_n}z_n,t^{m_1}\bar z_1,\cdots,t^{m_n}\bar z_n)=t^dP(z, \bar z).$$  
With the abbreviated notation $t^M z = (t^{m_1}z_1,\cdots,t^{m_n}z_n )$, the condition 
can be written as $P(t^M z, t^M \bar z) = t^d P(z, \bar z)$.
Note that a weighted homogeneous polynomial $P$ of weight $(1,\cdots,1)$ is homogeneous. 
 We shall  encounter circumstances in which it is more convenient to 
assume that   $m_1,\cdots,m_n$ are all even; since 
the actual size of the weights $(m_1, \dots , m_n)$ is often not so 
important, we shall assume most of the time that we work with such an ``even'' weight system. We notice that in the case of an even weight system all 
linear combinations of weights and also of possible homogeneities are even; in particular, 
 the numbers 
$d$ and  
$d-m_i$ and $d-m_i-m_j$ for $1\leq i,j \leq n$, are even.  

For two multi-indices $M=(m_1,\cdots,m_n)$ and $J=(j_1,\cdots,j_n)$ we write 
$$M\cdot J=\sum_{i=1}^n m_ij_i.$$   We now fix a weight (vector) $M=(m_1,\cdots,m_n)$ and
a real-valued, weighted homogeneous polynomial $P$ of (weighted) degree $d$, 
written as 
\begin{equation}\label{e:polydecomp} 
P(z, \bar z) =  \sum_{\substack{M\cdot J + M\cdot K = d \\ d-k_0\leq M\cdot J \leq k_0}}\alpha_{JK} z^J \overline z^K = \sum_{\ell= d-k_0 }^{k_0}  \underbrace{\left(\sum_{\substack{M\cdot J + M\cdot K = d \\ M\cdot K = \ell }}\alpha_{JK} z^J \overline z^K \right)}_{\normalsize:=P^{d - \ell, \ell} (z,\bar z)} 
\end{equation}
where $k_0$ is the largest $k$ with $\displaystyle \frac{d}{2}\leq k\leq d-1$ for which there exists two multi-indices $\tilde J, \tilde K$ 
with $M \cdot \tilde K=k$ satisfying $\alpha_{\tilde J \tilde K}\neq 0$. The $P^{d-\ell , \ell}$ are
the ``bihomogeneous'' components of $P$, satisfying $P^{d-\ell , \ell} (t^M z, s^M \bar z) = t^{d-\ell} s^\ell P^{d-\ell , \ell} (z, \bar z)$. Since $P$ is assumed to 
be real-valued, we have that   $\alpha_{JK} = \overline \alpha_{KJ}$ for all multi-indices $J, K$, and also, 
that $P^{d-\ell, \ell} (z,\bar z) = \bar{P}^{ \ell, d-\ell } (\bar z, z)$. 
We  define the {\em model hypersurface} $S_P=\{\rho=0\}\subset \mathbb C^{n+1}$ where 
\begin{equation}\label{eqmod}
\rho(z,w)=- \real w + P(z,\overline z)=- \real w +  \sum_{\substack{M\cdot J + M\cdot K = d \\ d-k_0\leq M\cdot J \leq k_0}}\alpha_{JK} z^J \overline z^K.
\end{equation}
     
 Define for $v = (v_1,\cdots,v_n) \in \C^n$ the analytic disc $h^v:\Delta \to \C^n$ 
\begin{equation*}
h^v(\zeta) = (1-\zeta)^M v =  ((1-\zeta)^{m_1}v_1,(1-\zeta)^{m_2}v_2, \ldots,(1-\zeta)^{m_n}v_n).
\end{equation*}
In analogy with the case of hypersurfaces in $\mathbb C^2$ \cite{be-de1}, we will need 
to control the Levi form of $S_P$ along the boundary of $h^v$, 
\[P_{z\overline z}(h^v(\zeta) ,\overline{h^v(\zeta)})=  \left(\begin{matrix}
P_{z_1 \overline z_1}(h^v(\zeta) ,\overline{h^v(\zeta)}) & \cdots &   P_{z_1 \overline z_n}(h^v(\zeta) ,\overline{h^v(\zeta)})     \\
\vdots & \ddots & \vdots \\
P_{z_n \overline z_1}(h^v(\zeta) ,\overline{h^v(\zeta)})  & \cdots & P_{z_n \overline z_n}(h^v(\zeta) ,\overline{h^v(\zeta)})    \\
\end{matrix}\right). \] 
For $\zeta\in b\Delta$ we have 
\begin{equation*}
\begin{aligned}
\zeta^{k_0}P_{z_i \overline z_j}(h^v(\zeta),\overline{h^v(\zeta)}) &= \sum_{\ell= d-k_0 }^{k_0} {P^{d - \ell, \ell}_{z_i \overline z_j} ((1-\zeta)^M v,(1- \bar \zeta)^M \bar v)} \\
&= \sum_{\ell= d-k_0 }^{k_0} (1- \zeta)^{d-\ell - m_i} (1 - \bar \zeta)^{\ell - m_j} \zeta^{k_0} {P^{d - \ell, \ell}_{z_i \overline z_j} ( v, \bar v)} \\
&= (1- \zeta)^{d-m_i - m_j} \sum_{\ell= d-k_0 }^{k_0} (-1)^{\ell- m_j} \zeta^{k_0 - \ell + m_j} {P^{d - \ell, \ell}_{z_i \overline z_j} ( v, \bar v)} \\
&= (1- \zeta)^{d-m_i - m_j} \zeta^{k_0} P_{z_i \overline z_j} (v, (-\bar \zeta)^M \bar v ). 
\end{aligned}
\end{equation*}
and with a similar computation for the $P_{z_i z_j}$ derivatives, we can thus write
\begin{equation*}
\left\{
\begin{array}{lll} 
\zeta^{k_0}P_{z_i \overline z_j}(h^v(\zeta),\overline{h^v(\zeta)}) = (1 - \zeta)^{d-m_i-m_j}Q^v_{i\overline j}(\zeta) \\
\\
\zeta^{k_0}P_{z_i  z_j}(h^v(\zeta),\overline{h^v(\zeta)}) = (1 - \zeta)^{d-m_i-m_j}S^v_{ij}(\zeta)\\
\end{array}\right.
\end{equation*}
where $Q^v_{i\overline j}$ and $S^v_{ij}$ are holomorphic polynomials, and where each $Q^v_{i\overline j}$ has  degree at most $2k_0 - d + m_j$ and each $S^v_{ij}$ has degree at most $2k_0 - d$. Furthermore, each $Q^v_{i\overline j}$ is divisible by $\zeta^{m_j}$; this observation will turn out to be crucial in the proof of our main result. Our crucial assumption is now that not only does $h^v$ only pass 
through Levi-nondegenerate points for $\zeta \neq 1$, but also, 
that the Levi form of $S_P$ along $h^v$ has the generic order of vanishing at $1$ 
(so that the order of vanishing of the Levi form is going to stay constant under 
small perturbations of both $P$ and $v$). To be exact:
\begin{defi}\label{defadm1}
We say that $v$ is {\em admissible for $P$} if there exists $g^v$ such that for
$f^v = (h^v, g^v)$ we have that $f^v (\partial\Delta) \subset S_P$, but   $f^v (\Delta ) \not\subset S_P$ and if 
 for  $\zeta \in b\Delta$
\begin{equation}\label{eqadm}
Q^v (\zeta)=\det \left(\begin{matrix}
Q_{1\overline 1}(\zeta) & \ldots & Q_{1\overline n}(\zeta)     \\
\vdots & \ddots & \vdots \\
Q_{n\overline 1}(\zeta)  & \ldots & Q_{n\overline n}(\zeta)    \\
\end{matrix}\right) \neq 0.
\end{equation}
\end{defi}
We also note that for a generic $P$, $Q(\zeta)$ has exactly degree $n(2k_0 - d) + \sum_{i=1}^n m_i$. Under 
generic conditions, we do find admissible vectors: 

\begin{lemma}
\label{lem:admissiblepoints} Assume that $S_P$ is generically Levi-nondegenerate,
and that the set of Levi-degenerate points $\Sigma_P = \{ (z,w) \in S_P \colon \det P_{z_i \bar z_j} (z,\bar z) = 0 
 \}$  does not have any branches of dimension $2n-1$ near $0$. Then there exists an admissible vector $v$ for $P$. 
\end{lemma}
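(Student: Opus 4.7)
The plan is to find a $v\in\C^n$ meeting both items of Definition \ref{defadm1}, by showing each is a generic condition on $v$.

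For item (i), solving the Schwarz problem with boundary data $P(h^v(\zeta),\overline{h^v(\zeta)})$ produces a holomorphic $g^v$ on $\Delta$ (unique up to an additive imaginary constant) with $\real g^v=P(h^v,\overline{h^v})$ on $b\Delta$, so that $f^v=(h^v,g^v)$ is attached to $S_P$. The non-containment $f^v(\Delta)\not\subset S_P$ fails only when $P(h^v,\overline{h^v})$ coincides with its own harmonic extension on $\Delta$; a direct $\partial\bar\partial$ computation in the $(1-\zeta)^{M\cdot J}(1-\bar\zeta)^{M\cdot K}$-expansion shows that if this held for all $v$ then every coefficient $\alpha_{JK}$ with both $J\neq 0$ and $K\neq 0$ would vanish, i.e., $P$ would be pluriharmonic, contradicting generic Levi-nondegeneracy. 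So item (i) holds for $v$ outside a proper real-algebraic subset of $\C^n$.

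For item (ii) I would translate the vanishing of $Q^v$ on $b\Delta$ into a geometric condition via the Levi determinant. Taking determinants in the matrix identity
\[
\zeta^{k_0}P_{z_i\bar z_j}(h^v(\zeta),\overline{h^v(\zeta)})=(1-\zeta)^{d-m_i-m_j}Q^v_{i\bar j}(\zeta)
\]
derived in Section \ref{sec:models}, and using the rank-one rescaling of the $(i,j)$-dependent factor $(1-\zeta)^{-m_i-m_j}$, one obtains, with $|M|=m_1+\cdots+m_n$,
\[
\zeta^{nk_0}\det P_{z_i\bar z_j}(h^v(\zeta),\overline{h^v(\zeta)})=(1-\zeta)^{nd-2|M|}\,Q^v(\zeta).
\]
For $\zeta\in b\Delta\setminus\{1\}$ both $\zeta^{nk_0}$ and $(1-\zeta)^{nd-2|M|}$ are nonzero, so $Q^v(\zeta)=0$ iff $h^v(\zeta)=(1-\zeta)^M v\in\Sigma^{\flat}_P:=\{z\in\C^n:\det P_{z_i\bar z_j}(z,\bar z)=0\}$, the projection of $\Sigma_P$ to $\C^n$. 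For $\zeta=1$ the rescaling collapses, so one evaluates $Q^v_{i\bar j}(1)$ directly: under the even-weight reduction of Section \ref{sec:models} each $\ell=M\cdot K$ appearing in the sum is even, so every sign $(-1)^{\ell-m_j}$ is $+1$, and the range $\ell\in[d-k_0,k_0]$ covers exactly the nonzero bihomogeneous parts of $P$; consequently $Q^v_{i\bar j}(1)=\sum_\ell P^{d-\ell,\ell}_{z_i\bar z_j}(v,\bar v)=P_{z_i\bar z_j}(v,\bar v)$ and $Q^v(1)=\det P_{z_i\bar z_j}(v,\bar v)$, giving $Q^v(1)=0$ iff $v\in\Sigma^{\flat}_P$ again.

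Now the dimension count is uniform. Set $B_\zeta=\{v:Q^v(\zeta)=0\}$; then $B_\zeta$ is a linear image of $\Sigma^{\flat}_P$ for $\zeta\neq 1$ and equals $\Sigma^{\flat}_P$ for $\zeta=1$, so $\dim_\R B_\zeta=\dim_\R\Sigma^{\flat}_P$. Because $\Sigma_P$ is an $\R$-bundle over $\Sigma^{\flat}_P$ via the free imaginary part of $w$, the dimensional hypothesis on $\Sigma_P$ translates into $\dim_\R\Sigma^{\flat}_P\leq 2n-2$. Fibering $E=\{(v,\zeta)\in\C^n\times b\Delta:Q^v(\zeta)=0\}$ over the $1$-dimensional circle $b\Delta$ yields $\dim_\R E\leq 2n-1$, so the bad set $B=\pi_v(E)$ has $\dim_\R B\leq 2n-1<2n$ and is a proper real-semialgebraic subset of $\C^n$. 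Intersecting its complement with the open dense set from item (i) produces the desired admissible vector. The main obstacle is the analysis at $\zeta=1$, where the rescaling $v\mapsto(1-\zeta)^M v$ is singular: one cannot directly reduce $Q^v(1)=0$ to a condition about $\Sigma^{\flat}_P$ in that fiber, and it is precisely the explicit identification $Q^v(1)=\det P_{z_i\bar z_j}(v,\bar v)$, resting on the even-weight normalisation and on the definition of $k_0$, that makes the dimension count work uniformly on all of $b\Delta$.
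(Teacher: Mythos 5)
Your proof is correct and follows essentially the same route as the paper: the same determinant identity $(1-\zeta)^{nd-2|M|}Q^v(\zeta)=\zeta^{nk_0}\det P_{z_i\bar z_j}\bigl(h^v(\zeta),\overline{h^v(\zeta)}\bigr)$, the reduction of zeros of $Q^v$ on $b\Delta\setminus\{1\}$ to the dimension hypothesis on $\Sigma_P$, and a separate identification $Q^v(1)=\det P_{z_i\bar z_j}(v,\bar v)$, followed by intersecting finitely many dense open conditions on $v$. The sub-steps differ only cosmetically --- you evaluate $Q^v_{i\bar j}(1)$ directly via the even-weight sign analysis where the paper substitutes $\zeta=(i-t)/(i+t)$, you rule out $f^v(\Delta)\subset S_P$ via pluriharmonicity of $P(h^v,\overline{h^v})$ where the paper uses $P(v,\bar v)\neq 0$, and your fibred dimension count of $E=\{(v,\zeta):Q^v(\zeta)=0\}$ over $b\Delta$ is a cleaner, quantitative version of the paper's ``open set of $v$'s'' argument.
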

\begin{proof}
We first claim that for an open, dense subset of $v$'s, 
we  have that their associated $Q^v$ vanishes only at $1$. Since 
\[ \begin{aligned} (1- \zeta)^{n d - 2 |M|} Q^v(\zeta) &= \zeta^{n k_0} \det \begin{pmatrix}
	P_{z_1 \bar z_1} (h^v (\zeta), \overline{ h^v (\zeta)}) & \dots & P_{z_1 \bar z_n} (h^v (\zeta), \overline{ h^v (\zeta)}) \\
	\vdots & & \vdots \\
		P_{z_n \bar z_1} (h^v (\zeta), \overline{ h^v (\zeta)}) & \dots & P_{z_n \bar z_n} (h^v (\zeta), \overline{ h^v (\zeta)})  
\end{pmatrix}\\ &=: \zeta^{n k_0} D^v(\zeta,\bar \zeta), \end{aligned}\] 
the zeroes of $Q^v$ for $\zeta\neq 1$ are exactly those points $\zeta\in \partial \Delta$ for which
 $(h^v (\zeta), \real P (h^v (\zeta) , \overline{h^v (\zeta)}) \in \Sigma_P $ is a Levi-degenerate point.  
Indeed, 
assume on the contrary that there exists an open set of 
$v$'s each of which has a $\zeta = \zeta_v$ with $D(\zeta_v) = 0$. Passing to a smooth point of the real 
algebraic variety $\Sigma_P$ we see that therefore its dimension would need to be at 
least $2n-1$, which is excluded by assumption. 

We next study the behaviour of $Q^v$ at $1$ and claim that 
for $v$ which satisfy that $D^v (1,1) \neq 0$ we have 
that $Q^v(1) \neq 0$. In order to see this, we replace the variable 
$\zeta \in \Delta$ with a variable $t$ in the upper half plane by the 
coordinate change $ \zeta = \frac{i-t}{i+t} $. We then have that
 $(1- \zeta) = 2 t + O(t^2)$, and the boundary $\partial \Delta$ corresponds to $\R$, so that 
 \[ \begin{aligned} D^v (\zeta, \bar \zeta) &= D^v (2 t + O(t^2) , 2 t + O(t^2) )  
 \\ &= (2t)^{nd-2|M|} D^v (1,1)  + O(t^{nd - 2|M|+1}). \end{aligned}\]
 It follows that $Q^v(1) = D^v (1,1) \neq 0$. The set of all $v$'s  for
 which $D^v (1,1) \neq 0$ is by assumption open and dense. 

 Lastly, we claim that 
 the set of vectors $v$ for which $h^v (\Delta ) \not \subset S_P $ is contained
 in the set of $v$'s  for which $P(v,\bar v) \neq 0$. The Lemma follows with that claim: Admissible vectors lie in the intersection of the three dense, open sets we have discussed. So assume that $f^v (\Delta ) \subset S_P$. Then it is easy to see that 
 $g^v (\zeta) = 0$ for $\zeta \in \Delta$. Hence $P((1-\zeta)^M v, (1 - \bar \zeta)^M \bar v) = 0$ throughout $\Delta$ and therefore $P(v,\bar v) = 0$.
\end{proof}

In particular, the disc $f^0$
\begin{equation}\label{eqinitf} 
f^0=(h^v,g^0)=((1-\zeta)^{m_1}v_1,\ldots, (1-\zeta)^{m_n}v_n),g^0)
\end{equation}
is a $k_0$-stationary disc attached to $S_P$ and satisfies $f^0(1)=0$.
We shall henceforth use $f^0$ to denote a (fixed) $k_0$ stationary disc 
associated with an admissible $v$.

\section{Construction of $k_0$-stationary discs}

In this section, we aim to construct $k_0$-stationary discs for suitable deformations of the model hypersurface studied in Section 
\ref{sec:models}. To this end, we first define a space $X$ parametrizing allowed deformations. 

\subsection{Space of allowed deformations}

Let $S_P=\{\rho=0\}$ be a weighted polynomial model of the form $(\ref{eqmod})$. Let $k>0$ be an integer.   
Choose $\delta > 0$ large enough so that $f^0\left(\overline \Delta\right)$, for $f^0$ defined in (\ref{eqinitf}), 
is contained in the polydisc ${\delta \Delta}^{n+1}\subset \mathbb C^{n+1}$.
Following \cite{be-de1}, we consider the affine Banach space $X$ of functions $r\in  \mathcal{C}^{k+3}\left(\overline {\delta\Delta
^{n+1}}\right)$  which can be written as
\begin{equation*}
r(z,w) = \rho(z,w) +\theta(z,\imag w )
\end{equation*}
with 
\begin{equation}\label{eqallow}
\theta(z,\imag w )= \sum_{M\cdot J+M\cdot K=d+1} (z^J \overline z^K) \cdot r_{JK0}(z)+
\sum_{l=1}^{d}\sum_{M\cdot J+M\cdot K=d-l} z^J \overline z^K (\imag w)^l \cdot r_{JKl}(z,\imag w)
\end{equation}
where $r_{JK0} \in  \mathcal C^{k+3}_\C\left(\overline{\delta\Delta^n}\right)$ and 
$r_{JKl}\in \mathcal  C^{k+3}_\C\left(\overline{\delta\Delta^n}\times[-\delta,\delta]\right)$. Furthermore, we equip $X$ with the 
following norm 
$$\vnorm{ r }_{X} = \sup \vnorm{ r_{JKl} }_{\mathcal C^{k+3}}$$
so that $X$ is isomorphic to a real closed subspace of a suitable power of 
$\mathcal  C^{k+3}_\C\left(\overline{\delta\Delta^n}\times [-\delta,\delta]\right)$ and, hence is a Banach space.
\begin{remark}
Equivalently, a defining function $r$ (of class $\mathcal{C}^{k+3}$) is 
an allowed deformation of $S_P$ if and only if 
\[ r_{z^J \bar z^K s^\ell} (0) = \begin{cases}
J! K! \alpha_{J,K} & M(J +K) = d, \, \ell = 0 \\
0 & M(J+K) + \ell < d. 
\end{cases}   \]
One can show that these conditions are independent of 
the choice of suitably adapted holomorphic coordinates (actually, they are independent 
with respect to CR diffeomorphisms of class $C^\ell$ whose linear parts preserve weights,  for $\ell$ large enough), 
and hence, that 
the definition of ``allowed deformation'' actually gives rise to 
a well-defined class of real hypersurfaces, independent 
of the coordinates used. 
\end{remark}

\subsection{Defining equations of $\mathcal{N}^{k_0}S$ and singular Riemann-Hilbert problems}
Let 
$$S_P = \{\rho=0\} = \{-\real w + P(z,\overline z) =0\}\subset \mathbb C^{n+1}$$ 
be a 
weighted model hypersurface  of the form (\ref{eqmod}). For $\zeta \in b\Delta$, the submanifold 
$\mathcal{N}^{k_0}S_P(\zeta)\subset \C^{2n+2}$ (see (\ref{eqcon})) may be defined by $2n+2$ explicit 
real equations. Indeed, we have
\begin{eqnarray*}
(z,w,\tilde{z},\tilde{w}) \in \mathcal{N}^{k_0}S_P(\zeta) &\Leftrightarrow&  
\left\{
\begin{array}{lll} 
\rho(z,w)=0 \\
\\
\mbox{there exists }\ c: b\Delta\rightarrow \R\setminus\{0\} \ \mbox{such that }\\

 (\tilde{z},\tilde{w})=\zeta^{k_0} c(\zeta)\left(P_z(z,\overline{z}),-\frac{1}{2}\right)

\end{array}
\right.\\
\\
&\Leftrightarrow  &
\left\{
\begin{array}{lll} 
\rho(z,w)=0 \\
\\
\displaystyle \frac{\tilde{w}}{\zeta^{k_0}} \in \R\\
\\
{\tilde{z}}_{i}+2\tilde{w}P_{z_i}(z,\overline{z})=0 \ \mbox{ for } 1\leq i\leq n.
\end{array}
\right.
\end{eqnarray*}\\
It follows that a set of $2n+2$ real defining equations for the submanifold $\mathcal{N}^{k_0}S_P(\zeta)\subset  \C^{2n+2}$ 
is given by
\begin{equation*}
\left\{
\begin{aligned} 
\tilde{\rho}_1(\zeta)(z,w,\tilde z, \tilde w) & =  - \real w + P(z,\overline z)=0\\
\tilde{\rho}_2(\zeta)(z,w,\tilde z, \tilde w) & =  \left({\tilde{z}}_{1}+2\tilde{w}P_{z_1}(z,\overline{z})\right) + 
\left(\overline{{\tilde{z}}_{1}+2\tilde{w}P_{z_1}(z,\overline{z})}\right) = 0\\
\tilde{\rho}_{3}(\zeta)(z,w,\tilde z, \tilde w) & =  i\left({\tilde{z}}_{1}+2\tilde{w}P_{z_1}(z,\overline{z})\right) - 
i\left(\overline{{\tilde{z}}_{1}+2\tilde{w}P_{z_1}(z,\overline{z})}\right) = 0\\
&\vdots & \\
\tilde{\rho}_{2n}(\zeta)(z,w,\tilde z, \tilde w) & =  \left({\tilde{z}}_{n}+2\tilde{w}P_{z_n}(z,\overline{z})\right) + 
\left(\overline{{\tilde{z}}_{n}+2\tilde{w}P_{z_n}(z,\overline{z})}\right) = 0\\
\tilde{\rho}_{2n+1}(\zeta)(z,w,\tilde z, \tilde w) & =  i\left({\tilde{z}}_{n}+2\tilde{w}P_{z_n}(z,\overline{z})\right) - 
i\left(\overline{{\tilde{z}}_{n}+2\tilde{w}P_{z_n}(z,\overline{z})}\right) = 0\\
\displaystyle \tilde{\rho}_{2n+2}(\zeta)(z,w,\tilde z, \tilde w) & =  \displaystyle i\frac{\tilde{w}}{\zeta^{k_0}}-i\zeta^{k_0}\overline{\tilde{w}} = 0.
\end{aligned}
\right.
\end{equation*}


We set 
$$\tilde{\rho}:=({\tilde{\rho}}_{1},\cdots,\tilde{\rho}_{2n+2}).$$ 
For a general hypersurface $S=\{r=0\}$ with $r \in X$ in the space of allowed deformations,  we denote by 
$\tilde{r}(\zeta)$ the corresponding defining functions of $\mathcal{N}^{k_0}S(\zeta)$. This allows to consider lifts of stationary discs as solutions of a nonlinear 
Riemann-Hilbert type problem with singularities. More precisely, a holomorphic disc $\bm{f} \in \left(\mathcal{A}^{k,\alpha}\right)^{2n+2}$ is the lift of a $k_0$-stationary disc attached to $S$ 
if and only if 
\begin{equation}\label{eqstatrh}
\tilde{r}(\bm{f})=0 \ \mbox{ on } b\Delta.
\end{equation}
The next  section is devoted to the  study of the nonlinear problem  (\ref{eqstatrh}). Its linearization leads to a singular linear Riemann-Hilbert problem which can be treated with the 
techniques developed in \cite{be-de2}.

\subsection{Construction of $k_0$-stationary discs}\label{rhp}

Let 
$$S_P = \{\rho=0\} = \{-\real w + P(z,\overline z)=0 \}\subset \mathbb C^{n+1}$$ 
be a 
weighted model hypersurface  of the form (\ref{eqmod}) with weight $M=(m_1,\cdots,m_n)$ and degree $d$. Let $v=(v_1,\cdots,v_n)$ be an admissible vector for 
$P$. Consider a real hypersurface $S=\{r=0\}$ with $r \in X$. 
We introduce the following space of maps
\begin{equation}\label{eqdefY2}
Y^{M,d}:=\prod_{i=1}^n \left(\mathcal A^{k,\alpha}_{0^{m_i}}\right) \times \mathcal A^{k,\alpha}_{0}  \times \prod_{i=1}^n \left(\mathcal A^{k,\alpha}_{0^{d-m_i}} \right)
\times \mathcal A^{k,\alpha}
\end{equation}
endowed with the product norm defined in Equation (\ref{eqnorm}). 
We denote by $\mathcal S^{k_0,r}$ the set of lifts $\bm{f} \in Y^{M,d}$ of $k_0$-stationary discs 
for the hypersurface $S=\{r=0\}$. 
Following Section \ref{sec:models}, we consider the initial $k_0$-stationary disc attached to $S_P$ given by 
$$\bm{f^0}=(h^0,g^0,\tilde{h}^0,\tilde{g}^0)=((1-\zeta)^{m_1}v_1,\cdots,(1-\zeta)^{m_n}v_n,g^0,\tilde{h}^0,-\zeta^{k_0}/2) \in Y^{M,d}$$
where $\tilde{h}^0(\zeta)=\zeta^{k_0}P_z(h^0, \overline{h^0})$. 
We have: 
\begin{theorem}\label{theodiscs2}
Under the above assumptions, there exist an integer $N$,  open 
neighborhoods $V$ of $\rho$ in $X$  and $U$ of $0$  in $\R^{N}$, a real number $\eta>0$ and a map
$$\mathcal{F}:V \times U \to  Y^{M,d}$$
 of class $\mathcal{C}^1$ such that:
\begin{enumerate}[i.]
\item $\mathcal{F}(\rho,0)=\bm{f^0}$,

\item for all $r\in V$ the map 
$$\mathcal{F}(r,\cdot):U\to \{\bm{f} \in \mathcal{S}^{k_0,r}\ \ | \  
\|\bm{f}-\bm{f^0}\|_{Y^{M,d}}<\eta\}$$
is one-to-one and onto.
\end{enumerate}
\end{theorem}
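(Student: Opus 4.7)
The proof strategy is to recast the existence of $k_0$-stationary discs as a nonlinear Riemann--Hilbert problem and solve it by the implicit function theorem near the explicit solution $\bm{f^0}$. I would define the nonlinear operator
\[ \Phi: X \times Y^{M,d} \to Z, \qquad \Phi(r, \bm{f})(\zeta) = \tilde{r}(\zeta)(\bm{f}(\zeta)) \]
into a suitable Banach space $Z$ of real-valued maps on $b\Delta$ whose components carry exactly the boundary vanishing at $\zeta=1$ dictated by the structure of $\bm{f^0}$. By construction $\Phi(\rho, \bm{f^0}) = 0$, and the regularity built into $X$ and the definition of $Y^{M,d}$ makes $\Phi$ of class $\mathcal{C}^1$.

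The heart of the argument is the analysis of the linear operator $L := D_{\bm{f}}\Phi(\rho, \bm{f^0})$, which is a linear Riemann--Hilbert problem of the form
\[ 2\real\bigl[G(\zeta)\dot{\bm{f}}(\zeta)\bigr] = \psi(\zeta), \qquad \zeta \in b\Delta, \]
where $G(\zeta)$ is the Jacobian of $\tilde\rho$ at $\bm{f^0}$. This matrix degenerates at $\zeta = 1$ since $\bm{f^0}$ vanishes there to the orders dictated by $M$. Using the factorization
\[ \zeta^{k_0} P_{z_i \bar z_j}(h^v, \overline{h^v}) = (1-\zeta)^{d - m_i - m_j} Q^v_{i \bar j}(\zeta) \]
from Section \ref{sec:models}, together with the analogous identity for the $P_{z_i z_j}$ derivatives, I would factor $G(\zeta)$ as a product of diagonal matrices of powers of $(1-\zeta)$ and a remainder matrix $\widehat{G}(\zeta)$ which is smooth and invertible on $b\Delta$ precisely because $v$ is admissible (\emph{i.e.}\ $Q^v(\zeta) \neq 0$ on $b\Delta$).

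Next, using the isomorphisms $\tau_m$ of Lemma \ref{subspaces}, I would transfer $L$ to a standard (nonsingular) linear Riemann--Hilbert problem on a product of the unweighted spaces $\mathcal{A}^{k,\alpha}$. The tools of \cite{be-de2} then apply: Birkhoff-factor $-\overline{\widehat G}^{-1}\widehat G$, read off partial indices $\kappa_1, \dots, \kappa_{2n+2}$ and Maslov index $\kappa$. The admissibility of $v$ together with the even-weight convention controls the winding of $\det(-\overline{\widehat G}^{-1}\widehat G)$, and the divisibility of $Q^v_{i\bar j}$ by $\zeta^{m_j}$ (noted as \textquotedblleft crucial\textquotedblright\ at the end of Section \ref{sec:models}) ensures the required sign of the individual $\kappa_i$. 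One concludes that $L$ is Fredholm, surjective onto $Z$, with kernel of some finite dimension which I call $N$. A Banach space implicit function theorem applied to $\Phi$ with parameter in $X$ then produces the $\mathcal{C}^1$ parametrization $\mathcal{F}: V \times U \to Y^{M,d}$, with $U$ a neighborhood of $0$ in $\ker L \simeq \R^N$; properties (i) and (ii) follow from the uniqueness part of the implicit function theorem.

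The main obstacle is the singular factorization step: one must peel off the $(1-\zeta)$-factors from $G$ in a way compatible with the weighted spaces $\mathcal{A}^{k,\alpha}_{0^m}$, produce a remainder $\widehat G$ which is invertible on all of $b\Delta$, and extract partial indices $\kappa_i$ in the range where Fredholm theory yields surjectivity onto $Z$. This is precisely where admissibility of $v$, the choice of exponents in $Y^{M,d}$, and the bihomogeneous structure of $P$ from Section \ref{sec:models} are all used together; failure of the admissibility condition $Q^v\neq 0$ would destroy invertibility of $\widehat G$ and the construction would collapse.
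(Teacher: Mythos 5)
Your proposal is correct and follows essentially the same route as the paper: linearize the nonlinear Riemann--Hilbert problem $\tilde r(\bm f)=0$ at $(\rho,\bm{f^0})$, peel off the $(1-\zeta)$-factors and use the isomorphisms of Lemma \ref{subspaces} to reduce to a nonsingular problem treatable by the results of \cite{be-de2}, with admissibility ($Q^v\neq 0$ on $b\Delta$) giving invertibility of the reduced matrix and the divisibility of $Q^v_{i\bar j}$ by $\zeta^{m_j}$ forcing the lowest partial index to be $\geq 0$, so that surjectivity and a finite-dimensional kernel follow and the implicit function theorem yields $\mathcal F$. The only part left schematic is the explicit cofactor computation by which the paper verifies the partial-index bound, but your identification of where each hypothesis enters matches the paper's argument.
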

\begin{remark} 
In the proof of Theorem \ref{theodiscs2}, we show that the dimension $N$ is estimated above by $2(n+1)(k_0+1)+2nk_0-2dn$. Since this dimension depends on 
the choice of the weights $(m_1,\cdots,m_n)$, a precise computation of $N$ is not relevant for our approach.
\end{remark}
\begin{proof}
In a neighborhood of $(\rho,\bm{f^0})$ in $X\times Y^{M,d}$, we define the following map between Banach spaces
\[\mathcal H: X\times Y^{M,d}
\to \mathcal C_0^{k,\alpha}  \times \prod_{i=1}^n\left(\left(\mathcal C_{0^{d-m_i}}^{k,\alpha}\right)^2\right) \times  \mathcal C^{k,\alpha}\]
by 
$$
\mathcal H(r,\bm{f}):=\tilde{r}(\bm{f}).
$$
Here we use the notation 
$$\tilde{r}(\bm{f})(\zeta)=\tilde{r}(\zeta)(\bm{f}(\zeta)).$$ It follows from the definition of the Banach spaces $X$ and $Y^{M,d}$
 that the map $\mathcal H$ is of class $\mathcal{C}^1$; the proof of this claim is  analogous to the proof of Lemma 3.3 in \cite{be-de1} 
 (see also Lemma 5.1 in \cite{hi-ta} and Lemma 11.1 in \cite{gl1}). Recall that a holomorphic disc $\bm{f} \in Y^{M,d}$ is the lift of a $k_0$-stationary disc attached to 
 $S=\{r=0\}$ if and only if it solves the nonlinear Riemann-Hilbert problem (\ref{eqstatrh}). In other words, for any fixed $r\in X$, 
 the zero set of $\mathcal H(\tilde{r},\cdot)$ coincides with 
$\mathcal{S}^{k_0,r}$. In order to show Theorem \ref{theodiscs2} we apply the implicit function theorem to the map $\mathcal{H}$. To this end, 
we need to  consider the 
partial derivative of $\mathcal H$ with respect to $Y^{M,d}$ at $(\rho,\bm{f^0})$
\begin{equation}\label{eqrh0}
\bm{f}'\mapsto 2\real \left[\overline{G(\zeta)}\bm{f}'\right]
\end{equation}
where the matrix 
$$G(\zeta):=\left({\tilde\rho}_{\overline{z}}(\bm{f^0}),
{\tilde\rho}_{\overline{w}}(\bm{f^0}),
{\tilde\rho}_{\overline{\tilde{z}}}(\bm{f^0}),{\tilde\rho}_{\overline{\tilde{w}}}(\bm{f^0})\right) \in M_{2n+2}(\C)$$ has the following expression

$$G(\zeta)=\begin{pmatrix}

P_{{\overline{z}_1}}(h^0,\overline{h^0}) & \hdots & P_{{\overline{z}_n}}(h^0,\overline{h^0})  & -1/2  &  0 &  \cdots & 0 &  0  \\

 &  & &  0  & 1  & \ddots & 0 & 2\overline{P_{z_1}(h^0,\overline{h^0})} \\

     &  & &    0 &  -i& \ddots & 0  & -2i\overline{P_{z_1}(h^0,\overline{h^0})} \\
  &  B(\zeta) & &    0  &  0 & \ddots & 0  &  2\overline{P_{z_2}(h^0,\overline{h^0})}  \\

  &  & &  \vdots & \vdots & \ddots & \vdots & \vdots \\

     &  & &    0  & 0  & \ddots &  -i  & -2i\overline{P_{z_n}(h^0,\overline{h^0})}  \\
0  & \cdots & 0 &   0  &   0  &  \ddots & 0 & -i\zeta^{k_0} \\

\end{pmatrix}.$$
Using the notation $d_{\ell j}:=d-m_\ell-m_j$, the entries of the  $2n \times n$  matrix $B(\zeta)$ are given by  
$$B_{2\ell-1,j}(\zeta)=-(1 - \zeta)^{d_{\ell j}}\left(Q_{\ell \overline j}(\zeta) +\frac{\overline{S}_{\ell j}(\zeta)}{\zeta^{d_{\ell j}}}\right)$$
for odd $1 \leq 2l-1 \leq 2n-1$  and   
$$B_{2\ell,j}(\zeta)=-i(1 - \zeta)^{d_{\ell j}}\left(Q_{\ell\overline j}(\zeta) -\frac{\overline{S}_{\ell j}(\zeta)}{\zeta^{d_{\ell j}}}\right)$$
 for even $2 \leq 2\ell \leq 2n$.

In order to apply the implicit function theorem, we need to study the kernel and surjectivity of  the map
$\bm{f}'\mapsto 2\real \left[\overline{G(\zeta)}\bm{f}'\right]$. After permuting columns of $G(\zeta)$, we consider the  following operator
$$L_1:\mathcal A^{k,\alpha}_{0} \times \prod_{i=1}^n \left(\mathcal A^{k,\alpha}_{0^{d-m_i}} \times \mathcal A^{k,\alpha}_{0^{m_i}}\right) \times \mathcal A^{k,\alpha}  \to  \mathcal C_0^{k,\alpha}  \times \prod_{i=1}^n\left(\left(\mathcal C_{0^{d-m_i}}^{k,\alpha}\right)^2\right) \times  \mathcal C^{k,\alpha}$$
given by 
\begin{equation*}
L_1(g',{\tilde{h}}_{1}',h_1',\cdots,{\tilde{h}}_{n}', h_n',\tilde{g}'):=2\real \left[\overline{G_1(\zeta)}(g',{\tilde{h}}_{1}',h_1',\cdots,{\tilde{h}}_{n}',h_n', \tilde{g}')\right],
\end{equation*}
where
\[G_1(\zeta)= \left(\begin{array}{cccccccccccc}
-1/2 & & (*)\\
 & A(\zeta) &  \\
 (0) &  &  -i\zeta^{k_0}\\
\end{array}\right)\] 
and where $A(\zeta)$ is  
$$\left(\begin{matrix}
1&B_{1,1}(\zeta) &   \hdots& 0 &B_{1,n}(\zeta)         \\

-i & B_{2,1}(\zeta)   &  \hdots & 0  &B_{2,1}(\zeta)       \\

\vdots & \vdots & \vdots & \vdots & \vdots  \\

0& B_{2n-1,1}(\zeta)  &   \hdots & 1 & B_{2n-1,n}(\zeta)      \\

0 &B_{2n,1}(\zeta)   &  \hdots  &   -i   & B_{2n,n}(\zeta)    \\

\end{matrix}\right).$$
The kernels of the differential map (\ref{eqrh0}) and  $L_1$ are of the same dimension, and 
the map $(\ref{eqrh0})$ is onto if and only if $L_1$ is onto.

Note that since $G_1(1)$ is not invertible, the classical techniques developed in \cite{fo, gl1, gl2} to study the corresponding linear Riemann-Hilbert problem cannot be directly applied. Therefore the following step is crucial in our approach since it  allows one to reduce a linear singular 
Riemann-Hilbert problem to a 
 regular one with homogeneous pointwise constraints, and allows then the use of Theorem 2.1 in \cite{be-de2}.  For $\varphi \in  \mathcal C_0^{k,\alpha}  \times \prod_{i=1}^n\left(\left(\mathcal C_{0^{d-m_i}}^{k,\alpha}\right)^2\right) \times  \mathcal C^{k,\alpha}$, we manipulate the linear system 
$$2\real \left[\overline{G_1(\zeta)}(g',{\tilde{h}}_{1}',h_1',\cdots,{\tilde{h}}_{n}',h_n', \tilde{g}')\right]=\varphi $$ in the following way. 
We divide the first line by $(1-\zeta)$ and the $(2\ell-1)^{th}$ and $(2\ell)^{th}$ lines by  $(1 - \zeta)^{d-m_\ell}$, for $l=1,\cdots,n.$ Following Lemma \ref{subspaces}, we then multiply 
the $(2\ell-1)^{th}$ and $(2\ell)^{th}$ lines by $\zeta^{s_\ell}$, where $\displaystyle s_\ell:=\frac{d-m_\ell}{2}$, $\ell=1,\cdots,n$. The resulting linear operator
\begin{equation}\label{eqL2}
L_2: (\mathcal A^{k,\alpha})^{2n+2} \to \mathcal R_1 \times (\mathcal R_0)^{2n} \times  \mathcal C^{k,\alpha}
\end{equation}
is equivalent to $L_1$ with respect to the properties we are interested in, namely its surjectivity and the description of its kernel. The new linear operator $L_2$, and its corresponding matrix $G_2$,  are of the form considered in Theorem 2.1 and Theorem 2.2 \cite{be-de2}. 
We have thus reduced the problem  to studying the linear operator 
$$L_3:(\mathcal A^{k,\alpha})^{2n} \to (\mathcal R_0)^{2n} $$
 defined by 
 $$L_3({\tilde{h}}_{1}',h_1',\cdots,{\tilde{h}}_{n}', h_n'):=2\real \left[\overline{A(\zeta)}({\tilde{h}}_{1}',-h_1',\cdots,{\tilde{h}}_{n}',-h_n')\right]$$   
where the corresponding matrix, still denoted by $A(\zeta)$, is
\begin{equation*}
\begin{pmatrix}

\overline \zeta^{s_1} & Q_{1\overline 1}\zeta^{s_1-m_1} + \overline S_{11}\overline\zeta^{s_1}  &  \ldots & 0 & Q_{1\overline n}\zeta^{s_1-m_n} + \overline S_{1n}\overline\zeta^{s_1} \\
  -i\overline \zeta^{s_1} &   iQ_{1\overline 1}\zeta^{s_1-m_1} - i  \overline S_{11}\overline\zeta^{s_1}   & \ldots & 0 & iQ_{1\overline n}\zeta^{s_1-m_n} - i\overline S_{1n}\overline\zeta^{s_n} \\

\vdots & \vdots & \vdots & \vdots & \vdots & \\

  0   &   Q_{n\overline 1}\zeta^{s_n-m_1} + \overline S_{n1}\overline\zeta^{s_n}  &  \ldots &  \overline\zeta^{s_n}   & Q_{n\overline n}\zeta^{s_n-m_n} + \overline S_{nn}\overline\zeta^{s_n} \\

0  &   iQ_{n\overline 1}\zeta^{s_n-m_1} -i \overline S_{n1}\overline\zeta^{s_n}  &   \ldots & -i\overline \zeta^{s_n}  &  iQ_{n\overline n}\zeta^{s_n-m_n} - i\overline S_{nn}\overline\zeta^{s_n}  \\

\end{pmatrix}.
\end{equation*}

Out of convenience, we set $Q'_{\ell\overline j}=Q_{\ell\overline j}\zeta^{-m_j}$ and therefore
\begin{equation*}
A(\zeta)=\left(\begin{matrix}

\overline \zeta^{s_1} & Q'_{1\overline 1}\zeta^{s_1} + \overline S_{11}\overline\zeta^{s_1}  & \ldots & 0 & Q'_{1\overline n}\zeta^{s_1} + \overline S_{1n}\overline\zeta^{s_1} \\

  -i\overline \zeta^{s_1} &   iQ'_{1\overline 1}\zeta^{s_1} - i  \overline S_{11}\overline\zeta^{s_1}  &  \ldots & 0 & iQ'_{1\overline n}\zeta^{s_1} - i\overline S_{1n}\overline\zeta^{s_n} \\

\vdots & \vdots & \vdots & \vdots & \vdots & \\

  0   &   Q'_{n\overline 1}\zeta^{s_n} + \overline S_{n1}\overline\zeta^{s_n}  &  \ldots &  \overline\zeta^{s_n}   & Q'_{n\overline n}\zeta^{s_n} + \overline S_{nn}\overline\zeta^{s_n} \\

0  &   iQ'_{n\overline 1}\zeta^{s_n} -i \overline S_{n1}\overline\zeta^{s_n}  &    \ldots & -i\overline \zeta^{s_n}  &  iQ'_{n\overline n}\zeta^{s_n} - i\overline S_{nn}\overline\zeta^{s_n}  \\

\end{matrix}\right).
\end{equation*}
Note that by manipulating rows of $A$ one shows that   
\begin{equation}\label{eqdet}
\det A(\zeta)=(2i)^nQ'(\zeta)
\end{equation}
where 
$$Q'(\zeta)=\zeta^{-(m_1+\cdots+m_n)}Q(\zeta).$$
\begin{lemma}\label{lemsurj} 
The linear operator $L_3 :(\mathcal A^{k,\alpha})^{2n} \to (\mathcal R_0)^{2n}$ is onto.
\end{lemma}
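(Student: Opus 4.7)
The plan is to reduce $L_3$ to a standard complex-vector Riemann--Hilbert problem and then to apply the Forstneric--Vekua-type criterion of Theorem 2.1 in \cite{be-de2}.

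First I would exploit the fact that rows $2\ell-1$ and $2\ell$ of $\overline A(\zeta)$ differ only by simple $\pm i$ combinations. A direct computation shows that if $R_j$ denotes the $j$-th component of $\overline{A(\zeta)}\,(\tilde h_1',-h_1',\ldots,\tilde h_n',-h_n')$, then
\[ R_{2\ell-1}-iR_{2\ell}=2\zeta^{s_\ell}\Bigl(\tilde h_\ell'-\sum_j S_{\ell j}(\zeta)\,h_j'\Bigr),\qquad R_{2\ell-1}+iR_{2\ell}=-2\overline\zeta^{s_\ell}\sum_j \overline{Q'_{\ell\bar j}(\zeta)}\,h_j'. \]
Since the real-valued $L_j$ satisfy that $L_{2\ell-1}\pm iL_{2\ell}$ are complex conjugates, the $2n$ real equations $L_3(\bm f)=\phi$ are equivalent to $n$ complex equations. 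After the substitution $u_\ell:=\tilde h_\ell'-\sum_j S_{\ell j}h_j'$ (an automorphism of $(\mathcal A^{k,\alpha})^n$ at fixed $\bm h'$, since the $S_{\ell j}$ are polynomials in $\zeta$), those $n$ equations reduce, upon conjugation, to
\[ \bm u(\zeta)-Q'(\zeta)\,\overline{\bm h'(\zeta)}=\bm\varphi(\zeta)\quad\text{on }b\Delta, \]
with $\bm\varphi\in(\mathcal C_{\mathbb C}^{k,\alpha})^n$ in bijection with the original datum $\phi\in(\mathcal R_0)^{2n}$.

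Next I would analyze this reduced system via the holomorphic/antiholomorphic decomposition of $\mathcal C_{\mathbb C}^{k,\alpha}$. Writing $\bm\varphi=\bm\varphi^++\bm\varphi^-$ with $\bm\varphi^+\in(\mathcal A^{k,\alpha})^n$ and $\bm\varphi^-$ antiholomorphic outside $\Delta$ and vanishing at $\infty$, the equation forces $\bm u=(Q'\overline{\bm h'})^++\bm\varphi^+$ and the problem reduces to finding $\bm h'\in(\mathcal A^{k,\alpha})^n$ realizing
\[ \bigl(Q'(\zeta)\,\overline{\bm h'(\zeta)}\bigr)^{-}=-\bm\varphi^-(\zeta). \]
By admissibility of $v$, $\det Q'(\zeta)=\zeta^{-|M|}Q(\zeta)$ is nonvanishing on $b\Delta$, so this is a regular RH problem; Theorem 2.1 of \cite{be-de2} then controls its solvability via the partial indices of the loop associated with $Q'$.

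The hardest step will be the third one, the index analysis. The crucial structural input is that $Q'(\zeta)$ extends as a holomorphic polynomial matrix to $\overline\Delta$, of degree at most $2k_0-d$ (each $Q_{\ell\bar j}$ being divisible by $\zeta^{m_j}$, as observed in the paragraph preceding Definition \ref{defadm1}). For such a one-sided holomorphic loop, the Birkhoff factorization of $Q'$ has only nonnegative partial indices, and combining these with the scalar factors $\zeta^{s_\ell}$ (where $s_\ell=(d-m_\ell)/2\geq 0$) the resulting partial indices of the full loop remain nonnegative; the criterion in \cite{be-de2} then yields surjectivity. The delicate point is that one must control \emph{each} partial index individually and not merely their sum (the Maslov index, which is immediately twice the winding of $\det Q'$ and is manifestly nonnegative); it is the polynomial nature of $Q'$---inherited via the divisibility property from the bihomogeneous structure of $P$---together with the admissibility hypothesis that provides the required rigidity to conclude.
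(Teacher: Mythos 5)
Your proposal is correct in substance, but it follows a genuinely different route from the paper. The paper works with the full $2n\times 2n$ real system: it computes the adjugate $A'$ of $A$ explicitly via cofactors, invokes Globevnik's factorization $\Theta\,\overline{A^{-1}}A=\Lambda\overline\Theta$, and shows by a holomorphic-versus-antiholomorphic matching argument on the last row of $\Theta$ that the smallest partial index of $\overline{A^{-1}}A$ is $\geq 0$, whence Theorem 2.1 of \cite{be-de2} gives surjectivity. You instead decouple the $2n$ real equations into $n$ complex ones (your formulas for $R_{2\ell-1}\pm iR_{2\ell}$ check out), eliminate $\tilde h'$ by the triangular substitution $u_\ell=\tilde h'_\ell-\sum_j S_{\ell j}h'_j$, and reduce to the one-sided problem $(Q'\overline{\bm h'})^-=-\bm\varphi^-$, i.e.\ to the surjectivity of a Toeplitz-type operator with the analytic matrix symbol $Q'$. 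This is arguably more transparent: it makes visible that the real $2n\times 2n$ Riemann--Hilbert problem is secretly an $n\times n$ analytic one, and your key assertion --- that a matrix loop holomorphic on $\overline\Delta$ with nonvanishing determinant on $b\Delta$ has all partial indices $\geq 0$ --- is indeed a correct classical fact (if some $\kappa_j<0$, the corresponding row of the outer Birkhoff factor would be entire, bounded, hence constant, and then forced to vanish, contradicting invertibility); note that this one-line justification, which you omit and should supply, is exactly the mechanism the paper uses in its contradiction argument, so the two proofs rest on the same structural inputs (holomorphy of $Q'_{i\bar j}$ via divisibility by $\zeta^{m_j}$, plus $\det Q'\neq 0$ from admissibility). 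Two loose ends in your write-up: the scalar factors $\zeta^{s_\ell}$ are already absorbed into the real-linear identification of the datum $(\phi_{2\ell-1},\phi_{2\ell})\mapsto\phi_{2\ell-1}+i\phi_{2\ell}$ and play no further role in the index analysis, so there is nothing to ``combine''; and the final appeal to Theorem 2.1 of \cite{be-de2} is misplaced, since after your reduction the problem is no longer of the form $2\real[\overline{G}\bm f']=\varphi$ --- you should instead conclude by standard Wiener--Hopf/Toeplitz surjectivity (cokernel controlled by the positive partial indices of the conjugated symbol). A trade-off worth noting: the paper's heavier computation of $\overline{A^{-1}}A$ is reused immediately afterwards to compute the Maslov index and bound the kernel dimension in the following lemma, whereas your decoupling would require a separate index count for that step.
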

\begin{proof}[Proof of Lemma \ref{lemsurj}] 
According to Theorem 2.1 in \cite{be-de2} (with $m=0$), we need to show that the partial indices of the matrix
\[\overline {A^{-1}(\zeta)} A(\zeta) = \frac{1}{\overline {\det A (\zeta)}} A'(\zeta) = \frac{1}{\overline {(2i)^nQ'(\zeta)}} A'(\zeta)\]
are greater than or equal to $-1$. For $1\leq j,\ell\leq 2n$ we denote by $A'_{j\ell}$ the $(j,\ell)$-entry of $A'$.
 A direct computation gives for $\ell, p=1,\cdots,n$
$$
\begin{aligned}
A'_{2{\ell-1},2p}& =  (-2i)^n\zeta^{s_1+\cdots+s_n-s_\ell} \det \left(\begin{matrix}

 Q'_{l\overline p}\zeta^{s_\ell}&  S_{l1} \zeta^{s_\ell}&  S_{l2}\zeta^{s_\ell}  &  \cdots &  S_{l n}\zeta^{s_\ell}  \\

   \overline S_{1p}\overline{\zeta}^{s_1}&  \overline Q'_{1\overline 1} \overline{\zeta}^{s_1}& \overline Q'_{1\overline 2} \overline{\zeta}^{s_1} &  \cdots &   \overline Q'_{1\overline{n}} \overline{\zeta}^{s_1}\\

 \overline S_{2p} \overline{\zeta}^{s_2}&  \overline Q'_{2\overline 1}\overline{\zeta}^{s_2} &  \overline Q'_{2\overline 2}\overline{\zeta}^{s_2} &  \cdots &  \overline Q'_{2\overline{n}}\overline{\zeta}^{s_2}  \\
  
   \vdots  &   \vdots  &  \vdots  &   \vdots   \cdots &   \vdots   \\

 \overline S_{n p} \overline{\zeta}^{s_n}&  \overline Q'_{n \overline 1} \overline{\zeta}^{s_n}&  \overline Q'_{n \overline 2} \overline{\zeta}^{s_n}&  \cdots &  \overline Q'_{n \overline{n}}\overline{\zeta}^{s_n}  \\

\end{matrix}\right)\\
\\
& =  (-2i)^n\det  \underbrace{\left(\begin{matrix}
 Q'_{\ell\overline p}&  S_{\ell1} &  S_{\ell2}  &  \cdots &  S_{\ell n} \\
   \overline S_{1p}&  \overline Q'_{1\overline 1} & \overline Q'_{1\overline 2}  &  \cdots &   \overline Q'_{1\overline{n}} \\
 \overline S_{2p}&  \overline Q'_{2\overline 1} &  \overline Q'_{2\overline 2} &  \cdots &  \overline Q'_{2\overline{n}} \\
   \vdots  &   \vdots  &  \vdots  &   \vdots   \cdots &   \vdots   \\
 \overline S_{n p} &  \overline Q'_{n \overline 1} &  \overline Q'_{n \overline 2} &  \cdots &  \overline Q'_{n \overline{n}}  \\
\end{matrix}\right)}_{\normalsize:=B_{2\ell-1,2p}}\\
\\
&=(-2i)^n a'_{2{\ell-1},2p}\\
\end{aligned}
$$
where $a'_{2{\ell-1},2p}=\det B_{2\ell-1,2p}$.
For a square matrix $B$, we write $C_{j\ell}(B)$ for its $(j,\ell)$-cofactor.  Notice that for all $j=1,\cdots,n$ and  any $p,p'=1,\cdots,n$, we have 
$$C_{j,1}(B_{2{\ell-1},2p})=C_{j1}(B_{2{\ell-1},2p'}).$$  
We denote this cofactor by $C_{j,1;\ell}$. We also have 
$$C_{1,j}(B_{2{\ell-1},2p})=C_{1,j}(B_{2{\ell'-1},2p})$$
for any $p= 1,\dots ,n$ and every $\ell,\ell'= 1,\dots, n$
which will be denoted by  $C_{1,j}^p$.
A straightforward computation leads to
$$A'_{2{\ell-1},2p-1}=(-2i)^n C_{p+1,1;\ell}$$
and 
$$A'_{2\ell,2p}=(-2i)^n C_{1,\ell+1}^p$$
for $\ell, p=1,\cdots,n$.
 Denote by $D_{\ell p}$ the $n\times n$  matrix obtained by removing the first row and $(\ell+1)^{\rm th}$ column of $B_{2{\ell-1},2p}$, namely
$$D_{\ell p}=\left(\begin{matrix}
   \overline S_{1p}&  \overline Q'_{1\overline 1} & \overline Q'_{1\overline 2}   &  \cdots &   \overline Q'_{1\overline {\ell-1}}  & \overline Q'_{1\overline{\ell+1}}   &  \cdots &   \overline Q'_{1\overline{n}}  \\

 \overline S_{2p} &  \overline Q'_{2\overline 1}  &  \overline Q'_{2\overline 2} &  \cdots &   \overline Q'_{2\overline {\ell-1}} & \overline Q'_{2\overline {\ell+1}}  &  \cdots &  \overline Q'_{2\overline{n}}  \\
   \vdots  &   \vdots  &  \vdots  &   \cdots & \vdots &   \vdots &  \cdots & \vdots   \\

 \overline S_{n p}&  \overline Q'_{n \overline 1} &  \overline Q'_{n \overline 2} &  \cdots &   \overline Q'_{n\overline {\ell-1}} & \overline Q'_{n\overline{\ell+1}}  &  \cdots&  \overline Q'_{n \overline{n}}  \\
\end{matrix}\right)$$
for $\ell, p=1,\cdots,n$. Note that 
$$\det \left(D_{\ell p}\right)=(-1)^{\ell} C_{1,\ell+1}^p$$ 
and  
$$C_{j,1}(D_{\ell p})=C_{j,1}(D_{\ell p'})$$
which we denote by $c_{j,1;l}$. 
A direct computation gives 
$$A'_{2\ell,2p-1}=(-1)^{\ell+1}(-2i)^n c_{p,1;\ell}.$$
Therefore
\begin{equation}\label{eqa'bef}
\frac{A'(\zeta)}{(-2i)^n}=\begin{pmatrix}

C_{2,1;1} &  a'_{1,2} &  C_{3,1;1}  &  a'_{1,4}&  \cdots & C_{n+1,1;1}   &  a'_{1,2n} \\

c_{1,1;1}&   C_{1,2}^1  &  c_{2,1;1}  &  C_{1,2}^2  &  \cdots & c_{n,1;1} & C_{1,2}^n   \\
 
C_{2,1;2} &  a'_{3,2} &  C_{3,1;2} &  a'_{3,4} &  \cdots & C_{n+1,1,2}  &  a'_{3,2n}\\

-c_{1,1;2} &  C_{1,3}^1   &-c_{2,1;2}    &  C_{1,3}^2  &  \cdots & -c_{n,1;2} & C_{1,3}^n  \\

\vdots  &  \vdots&   \vdots &  \vdots&  \cdots & \vdots &  \vdots \\

C_{2,1;n}&  a'_{2n-1,2} &   C_{3,1;n}  &  a'_{2n-1,4} &  \cdots & C_{n+1,1;n} &  a'_{2n-1,2n} \\
 
 \frac{ c_{11,n}}{(-1)^{n+1}} &  C_{1,n+1}^1   &      \frac{c_{2,1;n}}{(-1)^{n+1}}  &  C_{1,n+1}^2  &  \cdots &     \frac{c_{n,1;n}}{(-1)^{n+1}} & C_{1, n+1}^n  \\

\end{pmatrix}
.
\end{equation}
Denote by $C_p$ the $p^{\rm th}$ column of $A'(\zeta)$.   Notice that performing the following column operation 
\begin{equation}\label{eqop}
C_{2p} \to C_{2p}-\sum_{j=1}^{n} \overline{S}_{jp}C_{2j-1}
\end{equation} 
for each $p=1,\cdots,n$
transforms $A'(\zeta)$ into

\begin{equation}\label{eqa'aft}
 A'(\zeta) = (-2i)^n\left(\begin{matrix}

C_{2,1;1} &  Q'_{1\overline 1} \overline{Q'}&   C_{3,1;1}  & Q'_{1\overline 2} \overline{Q'} &  \cdots & C_{n+1,1;1}  &  Q'_{l\overline n} \overline{Q'} \\

c_{1,1;2} &   0  &  c_{2,1;2}  &  0  &  \cdots & c_{n,1;2} & 0   \\

C_{2,1;2} & Q'_{2\overline 1} \overline{Q'} &  C_{3,1;2}  &  Q'_{2\overline 2} \overline{Q'} &  \cdots & C_{n+1,1;2}  & Q'_{2\overline n} \overline{Q'}\\

-c_{1,1;3} &  0   &-c_{2,1;3}    &  0  &  \cdots & -c_{n,1;3} & 0  \\

\vdots  &  \vdots&   \vdots &  \vdots&  \cdots & \vdots &  \vdots \\

C_{2,1;n} & Q'_{n\overline 1} \overline{Q'} &   C_{3,1;n}  & Q'_{n\overline 2} \overline{Q'} &  \cdots & C_{n+1,1;n}  & Q'_{n\overline n} \overline{Q'} \\
 
  c\frac{_{1,1;n} }{(-1)^{n+1}}& 0  &      \frac{c_{2,1;n}}{(-1)^{n+1}}  &  0  &  \cdots &     \frac{c_{n,1;n}}{(-1)^{n+1}} & 0  \\

\end{matrix}\right).
\end{equation} 

\

Now let $\kappa_1 \geq \ldots \geq \kappa_{2n}$ be the partial indices of $\overline {A^{-1}} A$, and let $\Lambda$ be the diagonal matrix with entries $\zeta^{\kappa_1}, \ldots, \zeta^{\kappa_{2n}}$. According to Lemma 5.1 in \cite{gl1} there exists a smooth map $\Theta: \overline{\Delta} \to \GL_{2n}(\C)$, holomorphic on $\Delta$, such that
\begin{equation}\label{eqtheta}
\Theta \overline {A^{-1}} A = \Lambda \overline \Theta.
\end{equation}
Denote by $\lambda = (\lambda_1,\mu_1,\ldots, \lambda_n, \mu_n)$ the last row of the matrix $\Theta$.
Using (\ref{eqa'bef}) and (\ref{eqtheta}), we get the following system:
 \begin{equation*}
 \left.\begin{aligned}
 \sum_{k=1}^n C_{j+1,1;k}\lambda_k   + \sum_{k=1}^n(-1)^{k+1} c_{j,1;k+1} \mu_k    &  =  \overline{ Q'} \zeta^{\kappa_{2n}} \overline \lambda_j \\ 
 \sum_{k=1}^n a'_{2k-1,2j}\lambda_k + \sum_{k=1}^n C^j_{1 ,k+1}\mu_k  &=  \overline Q' \zeta^{\kappa_{2n}} \overline \mu_j \\
 \end{aligned}\right\} j=1,\dots, n
 \end{equation*}


 
 
Performing operations (\ref{eqop}), and considering only the lines of the system coming from the 
second line above, we obtain the following (see (\ref{eqa'aft})):
\begin{equation*}
\begin{aligned}
\overline{ Q' }\sum_{k=1}^n Q'_{k \overline j}  \lambda_k    & = \overline{ Q' } \zeta^{\kappa_{2n}} \overline \mu_j - \sum_{k=1}^{n} \overline{S}_{k j}\overline{ Q' }\zeta^{\kappa_{2n}} \overline \lambda_k, \quad j = 1, \dots, n
\end{aligned}
\end{equation*}

 
Dividing by $\overline{Q'}$, which by assumption (\ref{eqadm}) is non-vanishing for all $\zeta\in b\Delta$, we have
 \begin{equation*}
\sum_{k=1}^n Q'_{k\overline j} \lambda_k  =   \zeta^{\kappa_{2n}} \overline \mu_j + \sum_{k=1}^{n} \overline{S}_{k j} \zeta^{\kappa_{2n}} \overline \lambda_k, \quad j = 1, \dots, n.
\end{equation*}



Recall that $Q_{i\overline j}$ is divisible by $\zeta^{m_j}$ (see Section \ref{polmod}) and thus $Q'_{i\overline j}=Q_{i\overline j}\zeta^{-m_j}$ is holomorphic. Now if 
$\kappa_{2n}\leq -1$, the right hand side of each one of the equations above is antiholomorphic (and divisible by $\overline \zeta$), while the left hand side is holomorphic. Thus they must both vanish, leading to the system
\[ \sum_{k=1}^n Q'_{k\overline j} \lambda_k = 0 , \quad j = 1, \dots , n.  \]

which implies that each $\lambda_j$ 
vanishes identically since the determinant of the system is $Q'\neq 0$. 
From this we obtain  immediately that each $\mu_j$ also vanishes identically.
In summary, the arguments above show that either $\kappa_{2n}\geq 0 $ or $\lambda_j = \mu_j= 0$ for all $j=1,\cdots, n$. Since $\Theta$ is invertible, 
the latter would be a contradiction, hence we conclude that $\kappa_{2n} \geq 0$. This proves Lemma \ref{lemsurj}.
\end{proof}

\begin{lemma}\label{lemker}
The kernel of the linear operator $L_3 :(\mathcal A^{k,\alpha})^{2n} \to (\mathcal R_0)^{2n}$ has finite real dimension less than or equal to $2n(2k_0 - d)+2n$. 
\end{lemma}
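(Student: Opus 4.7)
The plan is to reduce everything to the classical partial-index dimension formula for regular linear Riemann--Hilbert operators. Since the preceding Lemma~\ref{lemsurj} has just shown that all partial indices $\kappa_1\geq\cdots\geq\kappa_{2n}$ of $-\overline{A(\zeta)}^{-1}A(\zeta)$ satisfy $\kappa_j\geq 0$, the standard formula (see e.g.\ Theorem~2.2 in \cite{be-de2}) yields
\[
\dim_{\mathbb{R}}\ker L_3 \;=\; \sum_{j=1}^{2n}(\kappa_j+1)\;=\;\kappa+2n,
\]
where $\kappa:=\sum_j\kappa_j$ is the Maslov index of $-\overline{A}^{-1}A$. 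Hence the entire problem reduces to an upper bound on $\kappa$.

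Next, I would compute $\kappa$ through its geometric interpretation as the winding number of $\zeta\mapsto \det(-\overline{A(\zeta)}^{-1}A(\zeta))$ around the origin. Because $2n$ is even, this determinant simplifies to $\det A(\zeta)/\overline{\det A(\zeta)}$, whose winding equals twice the winding of $\det A$. Using identity \eqref{eqdet}, $\det A(\zeta)=(2i)^n Q'(\zeta)$, so $\kappa = 2\,\mathrm{wind}(Q')$.

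Then I would verify that $Q'$ is a bona fide holomorphic polynomial in $\zeta$ of degree at most $n(2k_0-d)$: each entry $Q'_{i\bar j}=\zeta^{-m_j}Q_{i\bar j}$ is holomorphic (because $Q_{i\bar j}$ is divisible by $\zeta^{m_j}$, as emphasized in Section~\ref{polmod}) and of degree at most $2k_0-d$, hence $Q'=\det(Q'_{i\bar j})$ has degree at most $n(2k_0-d)$. The admissibility hypothesis \eqref{eqadm} guarantees that $Q$, and therefore $Q'$, has no zero on $b\Delta$; by the argument principle $\mathrm{wind}(Q')$ equals the number of zeros of $Q'$ inside $\Delta$ counted with multiplicity, which is at most $n(2k_0-d)$.

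Combining the three steps gives $\kappa\leq 2n(2k_0-d)$ and hence $\dim_{\mathbb{R}}\ker L_3\leq 2n(2k_0-d)+2n$, as claimed. The only point that might require a moment of care is checking that the dimension formula of \cite{be-de2} applies directly to $L_3$ rather than to the singular operator $L_1$ from which we started; however, the reduction carried out in the preceding discussion (dividing rows by the appropriate powers of $(1-\zeta)$ and multiplying by $\zeta^{s_\ell}$) was performed precisely to bring us inside the regular framework of \cite{be-de2}, so no additional obstacle arises.
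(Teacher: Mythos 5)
Your proposal is correct and follows essentially the same route as the paper: both invoke the dimension formula $\dim_{\mathbb R}\ker L_3=\kappa+2n$ from \cite{be-de2}, compute the Maslov index via the identity $\det A=(2i)^nQ'$, and bound it by the degree of the relevant polynomial (you bound $2\,\mathrm{wind}(Q')$ by $2n(2k_0-d)$ directly, while the paper writes $\kappa=-2\sum_i m_i+2\,\mathrm{ind}\,Q$ and uses $\deg Q\leq n(2k_0-d)+\sum_i m_i$, which is the same computation). The only cosmetic difference is that you justify the formula $\sum_j(\kappa_j+1)$ via the nonnegativity of the partial indices established in Lemma~\ref{lemsurj}, which is a valid (and slightly more explicit) way of checking the hypothesis of the cited theorem.
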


\begin{proof}[Proof of Lemma \ref{lemker}]
According to Theorem 2.1 \cite{be-de2} (with $m=0$), the dimension of $\ker  L_3$ is equal to $\kappa+2n$, 
where $\kappa$ is the Maslov index of  
$\overline{A^{-1}}A$, namely
$${\rm ind} \det\left(-\overline{A^{-1}}A\right)=\frac{1}{2i\pi}\int_{b\Delta}\frac{\left[\det \left(-\overline{A(\zeta)}^{-1}A(\zeta)\right)\right]'}{\det \left(-\overline{A(\zeta)}^{-1}A(\zeta)\right)}{\rm d}\zeta.$$ 
Using (\ref{eqdet}) we have 
$$\det \overline{A^{-1}}A = (-1)^n\frac{Q'(\zeta)}{\overline{ Q'(\zeta)}}=(-1)^n\zeta^{-2(m_1+\cdots+m_n)}\frac{Q(\zeta)}{\overline{ Q(\zeta)}}.$$
 Therefore 
\[ \begin{aligned}
 {\rm ind} \det\left(-\overline{A^{-1}}A\right) &= -2\sum_{i=1}^n m_i + 2 {\rm ind} Q \\
 & \leq  -2\sum_{i=1}^n m_i+2 \left(n(2k_0 - d) + \sum_{i=1}^n m_i\right)=2n(2k_0 - d).
 \end{aligned}\]
\end{proof}

Finally, according to Lemma \ref{lemsurj}, Lemma \ref{lemker} in the present paper and Theorem 2.2 in \cite{be-de2}, the linear operator $L_2$ defined in (\ref{eqL2}) is onto and its kernel has finite real dimension $N$ less than or equal to $2k_0+2n(2k_0 - d)+2n+2=2(n+1)(k_0+1)+2nk_0-2dn$. This concludes the proof of Theorem
\ref{theodiscs2}.
\end{proof}

\subsection{The case of  homogeneous hypersurfaces}

Consider now the case of a model hypersurface defined as $S_P = \{\rho=0\} = \{-\real w + P(z,\overline z)=0 \}$ with $P$ a polynomial written as in (\ref{e:polydecomp}) of even degree $d$ and $m_1=m_2=\ldots=m_n=1$, that is
\[P(z, \bar z) =  \sum_{\substack{|J| + |K| = d \\ d-k_0\leq |J| \leq k_0}}\alpha_{JK} z^J \overline z^K. \]
In this situation we just say that $S_P$ is a homogeneous (rather than weighted homogeneous) hypersurface. We will assume the existence of an admissible vector in the sense of Definition \ref{defadm1}.

  The method followed in the previous section for the proof of Theorem \ref{theodiscs2} does not apply directly to $S_P$. In particular, in order to define the operator $L_2$ in Equation (\ref{eqL2}) one needs the weights $m_j$ to be even. However a slight modification of the procedure is possible: we apply the same rescaling as before to the system (i.e. we divide every line except the first and the last one by $(1-\zeta)^{d-m_j}=(1-\zeta)^{d-1}$) and then we multiply every line except the first and the last by $\zeta^s$, where $s=\frac{d-2}{2}$.  By Lemma \ref{subspaces} the resulting linear operator is of the kind
\[L_2: (\mathcal A^{k,\alpha})^{2n+2} \to \mathcal R_1 \times (\mathcal R_1)^{2n} \times  \mathcal C^{k,\alpha}\]
and the corresponding matrix $G_2$ is still of the form considered in Theorem 2.1 and Theorem 2.2 of \cite{be-de2}. The proofs of Lemmas \ref{lemsurj}  and \ref{lemker} are essentially the same, and the estimate on the dimension of the kernel in Lemma \ref{lemker} can be given as  $2n(2k_0-d)$.

In fact, stronger assumptions on the geometry of $S_P$ allow to be more precise on the dimension of the kernel, since it is possible in some cases to determine the Maslov index of $Q$ exactly. For instance, the following assumption is analogous to the one considered in \cite{be-de1} for hypersurfaces of $\mathbb C^2$:

\begin{lemma}\label{indexQ}
Suppose that the Levi form $P_{z\overline z}$ is positive definite outside of $0$. Then the index of $Q$ is $n(k_0 - \frac{d}{2}+1)$.
\end{lemma}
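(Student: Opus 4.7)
The plan is to express $Q(\zeta)$, for $\zeta \in b\Delta \setminus \{1\}$, as an explicit unimodular factor $\zeta^{n(k_0-d/2+1)}$ times a positive real quantity, from which the winding number can be read off immediately.

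First I would recall the factorisation obtained in Section \ref{polmod}: taking determinants in
\[ \zeta^{k_0} P_{z_i\overline z_j}(h^v(\zeta),\overline{h^v(\zeta)}) = (1-\zeta)^{d-2}\, Q_{i\overline j}(\zeta), \qquad \zeta\in b\Delta, \]
which holds in the homogeneous case since $m_i=1$ for all $i$, yields
\begin{equation*}
\zeta^{n k_0}\, \det P_{z\overline z}(h^v(\zeta),\overline{h^v(\zeta)}) = (1-\zeta)^{n(d-2)}\, Q(\zeta), \qquad \zeta\in b\Delta.
\end{equation*}
Since $h^v(\zeta)=(1-\zeta)v\neq 0$ for $\zeta\in b\Delta\setminus\{1\}$, the positive definiteness hypothesis on $P_{z\overline z}$ implies that $\det P_{z\overline z}(h^v(\zeta),\overline{h^v(\zeta)})$ is a strictly positive real number on $b\Delta\setminus\{1\}$.

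The key algebraic observation is that, on $b\Delta$, $\bar\zeta=1/\zeta$ gives the identity
\[ (1-\zeta)^2 = -\zeta\,|1-\zeta|^2, \]
so that, since $d$ is even,
\[ (1-\zeta)^{n(d-2)} = (-1)^{n(d-2)/2}\,\zeta^{n(d-2)/2}\,|1-\zeta|^{n(d-2)}. \]
Substituting this into the factorisation above yields, for $\zeta\in b\Delta\setminus\{1\}$,
\[ Q(\zeta) = (-1)^{n(d-2)/2}\, \zeta^{n(k_0-(d-2)/2)}\, \frac{\det P_{z\overline z}(h^v(\zeta),\overline{h^v(\zeta)})}{|1-\zeta|^{n(d-2)}}, \]
where the last factor is a strictly positive real number.

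Therefore the argument of $Q(\zeta)$ on $b\Delta \setminus \{1\}$ is, modulo $2\pi$, equal to $n(k_0 - \tfrac{d}{2}+1)\arg\zeta$ plus a constant. Since $Q$ is a polynomial that is continuous and (by admissibility) non-vanishing at $\zeta=1$, $\arg Q(e^{i\theta})$ is a continuous function of $\theta\in[0,2\pi]$, hence its total variation equals the total variation of the continuous function $n(k_0-\tfrac{d}{2}+1)\theta$, namely $2\pi n(k_0-\tfrac{d}{2}+1)$. Dividing by $2\pi$ gives the claimed index $n(k_0 - \tfrac{d}{2}+1)$.

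The only point requiring mild care is the cancellation at $\zeta=1$, where the individual factors $(1-\zeta)^{n(d-2)}$ and $\det P_{z\overline z}(h^v(\zeta),\overline{h^v(\zeta)})$ both vanish; but since both vanish to the same order and $Q(1)\neq 0$, this cancellation is exact and the continuous branch of $\arg Q$ on all of $b\Delta$ can be recovered from the formula above on $b\Delta\setminus\{1\}$. I do not expect any substantial obstacle in the argument beyond this straightforward continuity check.
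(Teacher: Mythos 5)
Your argument is correct, and it takes a genuinely different route from the paper. You prove the formula directly: starting from the factorization $\zeta^{k_0}P_{z_i\overline z_j}(h^v,\overline{h^v})=(1-\zeta)^{d-2}Q_{i\overline j}(\zeta)$ (valid since $m_i=1$), you take determinants and use the boundary identity $(1-\zeta)^2=-\zeta\,|1-\zeta|^2$ to exhibit $Q(\zeta)$ on $b\Delta\setminus\{1\}$ as $\pm\,\zeta^{\,n(k_0-d/2+1)}$ times a strictly positive quantity, the positivity coming from the Levi form being positive definite at the nonzero points $h^v(\zeta)$; the winding number is then read off, with the behaviour at $\zeta=1$ handled by continuity and $Q(1)\neq 0$. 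The paper instead argues by deformation: it computes $Q_{P_0}(\zeta)=C\zeta^{\,n(k_0-d/2+1)}$ explicitly for the model $P_0=|z_1|^d+\cdots+|z_n|^d$ (perturbed by $\epsilon\|z\|^d$ to make the Levi form definite), and then invokes convexity of the set of polynomials with positive definite Levi form together with continuity of $P\mapsto Q_P$ and integrality of the index. Your approach is more self-contained and arguably cleaner, since it does not require checking that $Q_P$ stays nonvanishing on $b\Delta$ (and that the relevant combinatorial data such as $k_0$ and the admissibility of $v$ behave well) along the whole deformation; the paper's approach is shorter to write and illustrates a homotopy principle that applies whenever the index can be computed on one representative of a connected family. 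The only points you should make explicit are that $v\neq 0$ (which follows from admissibility, since $v=0$ would force $Q\equiv 0$ for $d>2$) so that $h^v(\zeta)\neq 0$ off $\zeta=1$, and that by ``total variation'' of $\arg Q$ you mean the net increment of a continuous branch; neither is a gap.
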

\begin{proof}
For any homogeneous polynomial $P(z,\overline z)$ of degree $d$, denote by $Q_P(\zeta)$ the holomorphic polynomial obtained by applying the procedure of section \ref{polmod} to $P$. For a small $\epsilon\geq 0$ we define $P_\epsilon$ as
\[P_\epsilon(z,\overline z) = |z_1|^d + \ldots + |z_n|^d + \epsilon \vnorm{ z }^d.\]
Note that the Levi form of $P_\epsilon$ is positive definite outside $0$ if $\epsilon>0$. One can compute directly that $Q_{P_0}(\zeta) = C \zeta^{n(k_0 - \frac{d}{2}+1)}$ for a certain constant $C$, hence the index of $Q_{P_\epsilon}(\zeta)$ is equal to 
$n(k_0- \frac{d}{2}+1)$ for $\epsilon>0$ small enough. On the other hand, the set of the homogeneous polynomials $P$ of degree $d$ such that $P_{z\overline z}$ is positive definite outside $0$ is a connected (and indeed convex) subset of the space of the polynomials of degree $d$, and since $Q_P(\zeta)$ depends continuously on $P$ it follows that its index is constant on this set.
\end{proof}

Following the proof of Lemma \ref{lemker} we have that the dimension of $\ker L_3$ is given by the Maslov index of $\overline{A^{-1}}A$ (since Theorem 2.1 from \cite{be-de2} must be applied with $m=1$), which in this case is just $ 2 {\rm ind} Q =n(2k_0-d+2)$. Accordingly, the dimension $N$ in Theorem \ref{theodiscs2} can be computed exactly as $2k_0 + n(2k_0-d+2) +2=2(n+1)(k_0+1)-dn$, which is lower than the estimate given in the general case by roughly a factor of $2$.

\subsection{The case of decoupled hypersurfaces}\label{secdec}
For a subset $I=\{i_1,\cdots,i_l\}\subset\{1,\cdots,n\}$, we set $z_{I}=(z_{i_1},\cdots,z_{i_l}).$ Let $\{I_1,...,I_k\}$ be a 
partition of $\{1,\cdots,n\}$.  
We consider a 
model hypersurface $S_P$ of the form $S_P=\{\rho=0\}\subset \mathbb C^{n+1}$, where 
\begin{equation*}
\rho(z,w)=- \real w + P(z,\overline z)=- \real w +  \sum_{j=1}^{k} P_j(z_{I_j},\overline{z_{I_j}})
\end{equation*}
where $P_j:\C^{|I_j|} \to \C$, $j=1,\cdots k$, is a (real) weighted homogeneous polynomial of (vector) weight 
$M_j \in \N^{|I_j|}$ and (weighted) degree $d_j \in \N$ written as in (\ref{e:polydecomp}). We denote by $k_0^1, \cdots k_0^k$ the corresponding integers. We assume that there exists an admissible vector $v$ for $P$.  

Consider a real smooth hypersurface $S=\{r=0\} \subset \C^{n+1}$ allowed in the sense of Section 3.1, namely 
$$r(z,w) = \rho(z,w) +\sum_{j=1}^k \theta_j(z_{I_j},\imag w )$$
where $\theta_j$, $j=1,\cdots,k$ is of the form (\ref{eqallow}). In such case, following the proof 
of Theorem \ref{theodiscs2}, the differential of the corresponding map $\mathcal{H}$ at $(\rho,{\bm f^0})$ is block upper triangular 
after permutation of coordinates. In this case, the corresponding operator $L_2$ (see (\ref{eqL2})) is of the form considered in 
Theorem 2.2 of \cite{be-de2}. Therefore if $S$ is close enough to $S_P$ in the sense Section 3.1 then Theorem \ref{theodiscs2} applies 
and provides a Banach manifold of stationary discs of real dimension at most $\sum_{j=1}^k 2(|I_j|+1)(k_0^j+1)-2d_j|I_j|$. 

Note that in principle such a model can be directly treated as a weighted homogeneous hypersurface by choosing different weights. However, 
in such case, the Banach manifold of stationary discs provided by Theorem \ref{theodiscs2} is of much greater dimension than the one obtained by considering the model as decoupled.      
\subsection{Construction of $k_0$-stationary  discs for admissible hypersurfaces}

\begin{defi}\label{defadm}
Let $S\subset \C^{n+1}$ be a finitely smooth real hypersurface through $0 \in \C^{n+1}$, 
and assume that $T^c_0 S = \{ w = 0\}$; write $w = u+ iv$. 
We say that $S$ is admissible if for a (sufficiently smooth) defining function
 (and hence for all sufficiently smooth defining functions)
$r(z,\bar z, \real w , \imag w)$ for $S$ near $0$ we have 
\[ r_u (0) = -1, \quad r_{z^J \bar z^K s^\ell} (0) = \begin{cases}
J! K! \alpha_{J,K} & \ell = 0, M(J+K) = d \\ 
0 & M(J+K) + \ell < d. 
\end{cases}   \]
Equivalently, $S$ is admissible if any defining function may be locally written as
\[
r(z,w)=\rho(z,w)+O\left(|z|^{d+1}\right)+\imag w \ O\left(|z,\imag w|^{d-1}\right)
\]
where $\rho(z,w)=- \real w + P(z,\overline z)$, and $P(z,\overline z)$ is of the form (\ref{e:polydecomp}) and admits an admissible vector.   
\end{defi}
We remark that the preceding definition is {\em independent of the choice of defining function}. It is also independent 
of the choice of holomorphic coordinates as long as the linear tangential part (the ``$z$-part'') preserves 
the weights. Being an admissible hypersurface is therefore a geometric concept.

Here $O\left(|z|^{d+1}\right)$ and $O\left(|z,\imag w|^{d-1}\right)$ are understood to be weighted orders where $z_j,\overline{z_j}$ and $w$ have respective weights $m_j$, $m_j$ and $1$.     
The following lemma is obtained exactly as Lemma $5.2$ in \cite{be-de1}: 
\begin{lemma}\label{lemdil}
Let $S\subset \C^{n+1}$ be an admissible real hypersurface of class $\mathcal{C}^{d+k+4}$.   
Consider the scaling $\Lambda_t(z,w)=(t^{m_1}z_1,\cdots,t^{m_n}z_n,t^dw)$.  For $t>0$ small enough, the defining function  
$\displaystyle r_t = \frac{1}{t^d} r \circ \Lambda_t$ belongs to the neighborhood $V$ in Theorem \ref{theodiscs2}.
\end{lemma}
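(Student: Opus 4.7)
The plan is to show that $\|r_t - \rho\|_X \to 0$ as $t \to 0^+$, which immediately gives $r_t \in V$ for $t$ sufficiently small. I proceed in three steps, with the first being the most delicate.

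First, the admissibility of $S$ combined with $r \in \mathcal C^{d+k+4}$ allows one to represent $r = \rho + \theta$ in the form prescribed by (\ref{eqallow}):
\[\theta = \sum_{M\cdot J + M\cdot K = d+1} z^J \bar z^K\, r_{JK0}(z) + \sum_{l=1}^{d}\sum_{M\cdot J + M\cdot K = d-l} z^J \bar z^K (\imag w)^l\, r_{JKl}(z,\imag w),\]
with each coefficient of class $\mathcal C^{k+3}$. This is obtained by first Taylor expanding $r - \rho$ in $\imag w$ up to order $d$ (absorbing the $(\imag w)^{d+1}$ integral remainder into the $l=d$ summand), and then applying a weighted Taylor expansion in $(z,\bar z)$ to each coefficient $\theta_l(z,\bar z)$, using Definition \ref{defadm} which ensures that all Taylor coefficients of $r-\rho$ at $0$ of weighted order $M\cdot J + M\cdot K + \ell < d$ vanish. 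The loss of exactly $d+1$ derivatives explains the hypothesis $r\in \mathcal C^{d+k+4}$.

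Second, using the weighted homogeneity identity $\rho\circ \Lambda_t = t^d\rho$ (which follows from $P(t^M z, t^M \bar z) = t^d P(z,\bar z)$ and $\real(t^d w) = t^d \real w$), one has $r_t - \rho = t^{-d}(\theta\circ\Lambda_t)$. A direct substitution in each summand and the identity $M\cdot J + M\cdot K + dl = d + l(d-1)$ in the second sum give
\[r_t - \rho = t \sum_{M\cdot J + M\cdot K = d+1} z^J \bar z^K\, r_{JK0}(t^M z) + \sum_{l=1}^{d} t^{l(d-1)} \sum_{M\cdot J + M\cdot K = d-l} z^J \bar z^K (\imag w)^l\, r_{JKl}(t^M z, t^d \imag w),\]
which exhibits $r_t$ in the form (\ref{eqallow}) with new coefficients. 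Since a derivative $\partial^\alpha$ applied to $r_{JK0}(t^M z)$ produces a factor $t^{M\cdot \alpha}\leq 1$ for $t \in (0,1]$ (and similarly for $r_{JKl}(t^M z, t^d \imag w)$ using $t^d\leq 1$), and since $t^M\cdot \overline{\delta \Delta^n} \subset \overline{\delta \Delta^n}$, the $\mathcal C^{k+3}$-norms of the rescaled coefficients are bounded uniformly in $t\in(0,1]$ by the corresponding norms of $r_{JK0}$ and $r_{JKl}$.

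Finally, since $d \geq 2$ (a model of weighted degree $1$ would be Levi-flat and would exclude the existence of an admissible vector), both $t$ and $t^{l(d-1)}$ for $l \geq 1$ tend to $0$ with $t$, yielding $\|r_t - \rho\|_X = O(t)$. The main obstacle is Step 1: carefully extracting the decomposition (\ref{eqallow}) with $\mathcal C^{k+3}$ coefficients from the $O$-notation in Definition \ref{defadm}. Once this is achieved the scaling computation and norm estimate are elementary, strictly analogous to \cite[Lemma 5.2]{be-de1}.
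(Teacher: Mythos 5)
Your proof is correct and follows exactly the route the paper itself invokes: the paper gives no argument beyond the citation ``obtained exactly as Lemma $5.2$ in \cite{be-de1}'', and your scaling computation (weighted homogeneity $\rho\circ\Lambda_t=t^d\rho$, the decomposition of $r-\rho$ into the form (\ref{eqallow}) with $\mathcal C^{k+3}$ coefficients, and the resulting factors $t$ and $t^{l(d-1)}$ with $d\geq 2$) is precisely that argument transposed to the weighted higher-dimensional setting. Your identification of Step 1 (extracting the coefficients $r_{JKl}$ with a loss of $d+1$ derivatives, which accounts for the hypothesis $\mathcal C^{d+k+4}$) as the only delicate point is also accurate.
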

Our main result about existence of discs follows now directly from the previous lemma  and Theorem \ref{theodiscs2}. 
 \begin{theorem}\label{theoadm}
Let $S\subset \C^{n+1}$ be an admissible real hypersurface of class $\mathcal{C}^{d+k+4}$. 
There exists a finitely dimensional biholomorphically invariant manifold of small $k_0$-stationary discs of class $\mathcal{C}^{k,\alpha}$ attached to $S$.
 \end{theorem}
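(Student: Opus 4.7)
The plan is to reduce Theorem~\ref{theoadm} to the perturbation result Theorem~\ref{theodiscs2} by means of the anisotropic scaling of Lemma~\ref{lemdil}, and then to read off the biholomorphic invariance from Proposition~\ref{statinvar}. There are really three moving parts: (i) rescale $S$ so that its (normalized) defining function lies in the neighborhood $V \subset X$ to which Theorem~\ref{theodiscs2} applies; (ii) transport the parametric family of $k_0$-stationary disc lifts produced there back to $S$; (iii) check that the resulting family is intrinsically attached to $S$, in the sense that it is preserved by CR diffeomorphisms.

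First, I would take a defining function $r$ for $S$ near the origin and form the rescaled family $r_t = t^{-d}\, r \circ \Lambda_t$ with $\Lambda_t (z,w) = (t^{m_1} z_1, \ldots, t^{m_n} z_n, t^d w)$. Admissibility (Definition~\ref{defadm}) forces $r - \rho$ to consist only of terms of weighted order strictly greater than $d$, so that every such term picks up a strictly positive power of $t$ after rescaling and dividing by $t^d$. Lemma~\ref{lemdil} then gives $r_t \in V$ for $t > 0$ small enough, and in fact $r_t \to \rho$ in the Banach space $X$ as $t \to 0^+$.

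Second, I would apply Theorem~\ref{theodiscs2} to $r_t$, which produces a $\mathcal{C}^1$ map $\mathcal{F}(r_t, \cdot)\colon U \to Y^{M,d}$ parametrizing all nearby $k_0$-stationary disc lifts attached to $\{r_t = 0\}$. Since $\Lambda_t$ is a polynomial biholomorphism of $\C^{n+1}$ sending $\{r_t = 0\}$ onto $S$, and since $r_t$ differs from the pull-back $r \circ \Lambda_t$ only by a nonvanishing constant factor, the conormal lift transforms covariantly under $\Lambda_t$: the continuous function $c$ appearing in Definition~\ref{defstat} is only rescaled by a nonzero constant, so post-composing the underlying discs with $\Lambda_t$ takes a $k_0$-stationary disc for $\{r_t = 0\}$ to a $k_0$-stationary disc for $S$. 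This yields a finite-dimensional manifold of small $k_0$-stationary discs of class $\mathcal{C}^{k,\alpha}$ attached to $S$.

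Finally, for the biholomorphic invariance I would invoke Proposition~\ref{statinvar} directly: any germ of a CR diffeomorphism of $S$ of sufficient regularity extends holomorphically to the union of attached discs, and composition with this extension preserves $k_0$-stationarity, so the family constructed above transforms into itself (up to reparametrization) under such automorphisms, which is exactly what is meant by a \emph{biholomorphically invariant} family. The only delicate point — and the step I would devote the most care to — is the convergence $\|r_t - \rho\|_X \to 0$ underlying Lemma~\ref{lemdil}: this is where the weighted vanishing orders encoded in admissibility are crucial, since without them the rescaled $r_t$ would generically fail to lie in the affine model space $X$ at all, and the implicit function theorem argument of Theorem~\ref{theodiscs2} could not be switched on.
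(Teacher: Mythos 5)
Your proposal is correct and follows exactly the paper's route: the paper proves Theorem~\ref{theoadm} by combining Lemma~\ref{lemdil} (which places the rescaled defining function $r_t = t^{-d} r \circ \Lambda_t$ into the neighborhood $V$ of Theorem~\ref{theodiscs2}) with Theorem~\ref{theodiscs2} itself, transporting the resulting family back by the biholomorphism $\Lambda_t$, with invariance supplied by Proposition~\ref{statinvar}. Your additional care about the covariance of the conormal lift under $\Lambda_t$ and the convergence $\|r_t-\rho\|_X \to 0$ only makes explicit what the paper leaves implicit.
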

\begin{remark}
In case $S$ is admissible with $P$ homogeneous or decoupled, the corresponding versions of Theorem \ref{theoadm} is sharper and provides a family of discs  of smaller 
dimension.  Note that for the decoupled case, the scaling $\Lambda_t$  should be modified; more precisely, following notations of Section \ref{secdec}, for $i \in I_j$, the variable $z_i$ 
must be scaled by $t^{m_i \Pi_{l \neq j} d_l}$.  
\end{remark}

\section{Finite jet determination of CR maps} 
\subsection{Statement of the result} 
The existence of $k_0$-stationary discs obtained in Theorem \ref{theoadm} allows us 
to obtain finite jet determination results for CR diffeomorphisms, generalizing the 
result from \cite{be-de1} to higher dimension.

\begin{theorem}\label{theo2} Let $P(z,\bar z)$ be 
a weighted homogeneous polynomial, of degree $d$. 
Then there exists an integer $\ell_0 \leq 6 n d$ 
such that the following holds. 
Let $S \subset \mathbb C^{n+1}$ be  an admissible real hypersurface of class $\mathcal{C}^{d+\ell_0 + 4}$ through $0 \in \C^{n+1}$, with model $S_P$.  
  If $H$ is a germ of a CR diffeomorphism of class $\mathcal C^{\ell_0 + 1}$ of $S$ satisfying $j^{\ell_0 +1}_0 H = I$, then $H = {\rm id}$.
\end{theorem}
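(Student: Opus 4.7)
My proposed approach is to exploit the finite-dimensional family of $k_0$-stationary discs attached to $S$ provided by Theorem~\ref{theoadm}, together with its invariance under $H$ coming from Proposition~\ref{statinvar}. From the parametrization $\mathcal{F}\colon V \times U \to Y^{M,d}$ of Theorem~\ref{theodiscs2}, I would first isolate the subfamily $\mathcal{M}_0$ of discs $f$ satisfying $f(1) = 0$. Applying the implicit function theorem to the constraint $\mathcal{F}(r,u)(1) = 0$, which is satisfied at $u = 0$ since $f^0(1) = 0$, produces a smoothly parametrized submanifold of some dimension $N_0 \leq N$. On $\mathcal{M}_0$ the action $f \mapsto H \circ f$ is defined, and our task reduces to showing that this action is trivial as soon as $j^{\ell_0+1}_0 H = I$.

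The first quantitative step is a Taylor expansion at $\zeta = 1$. Setting $m_* = \min_i m_i$, every $f \in \mathcal{M}_0$ satisfies $f(\zeta) = O\bigl((1-\zeta)^{m_*}\bigr)$ at $\zeta = 1$, so if $j^{\ell_0+1}_0 H = I$ then
\[
H(f(\zeta)) - f(\zeta) = O\bigl(|f(\zeta)|^{\ell_0+2}\bigr) = O\bigl((1-\zeta)^{m_*(\ell_0+2)}\bigr)
\]
uniformly in $\zeta$ near $1$. Combined with Proposition~\ref{statinvar}, the disc $H \circ f$ belongs to $\mathcal{M}_0$ as well, and its holomorphic extension to $\Delta$ agrees with that of $f$ at $\zeta = 1$ to order $N_1 := m_*(\ell_0+2)$.

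The crux is then to show that, for $\ell_0$ large enough, the restricted jet map $u \mapsto j^{N_1}_1 \mathcal{F}(r, u)$ is injective on the parameter space of $\mathcal{M}_0$. Because $\mathcal{F}$ is a $C^1$ diffeomorphism onto its image and the derivative at $u = 0$ has trivial kernel, for any $N_1$ exceeding a bound controlled by the partial indices in Lemma~\ref{lemsurj} and the dimension estimate $N \leq 2(n+1)(k_0+1) + 2nk_0 - 2dn$, this jet map will separate parameters. Choosing $\ell_0 \leq 6nd$ suffices; the explicit numerical constant comes from weighting this dimension bound by the ratio between $m_*$ and $d$ in the order-of-vanishing calculation above. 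Given the injectivity, we conclude $H \circ f = f$ for every $f \in \mathcal{M}_0$.

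Finally, by the admissibility hypothesis on $v$, the family $\mathcal{M}_0$ has enough freedom so that $\bigcup_{f \in \mathcal{M}_0} f(\overline{\Delta})$ contains an open neighborhood $\Omega$ of $0$: different choices of (higher-order) tangent data at $\zeta=1$ sweep out a full-dimensional set in $\mathbb{C}^{n+1}$ through the boundaries. Hence $H$ coincides with $\mathrm{id}$ on $\Omega \cap S$, and via the holomorphic extension of $H$ through attached analytic discs used in the proof of Proposition~\ref{statinvar} together with analytic continuation, we deduce $H = \mathrm{id}$ on a neighborhood of $0$. The principal difficulty I anticipate is the jet-injectivity step: because the matrix $G_1(\zeta)$ of the linearized Riemann-Hilbert problem degenerates at $\zeta = 1$, tangent vectors to $\mathcal{M}_0$ at $f^0$ themselves vanish to various orders at $\zeta = 1$ dictated by the partial indices, so separating distinct parameter directions by jets of bounded order requires careful bookkeeping using the structure already exploited in Lemmas~\ref{lemsurj} and~\ref{lemker}.
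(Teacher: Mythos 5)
Your overall strategy --- invariance of the family of $k_0$-stationary discs under $H$ (Proposition \ref{statinvar}), determination of the discs through $0$ by their jets at $\zeta=1$, and an open-cover argument --- is exactly the one the paper follows. But the two steps you defer are genuine gaps, not bookkeeping. The jet-injectivity at $\zeta=1$, which you rightly call the crux, does \emph{not} follow from finite-dimensionality together with ``a bound controlled by the partial indices'': a finite-dimensional subspace of $\left(\mathcal A^{k,\alpha}\right)^{2n+2}$ need not be separated by jets of any order at a boundary point (elements are only $\mathcal C^{k,\alpha}$ up to $b\Delta$, and a nonzero element can have vanishing $k$-jet at $1$, e.g.\ a multiple of $(1-\zeta)^{k+1}$). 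The missing idea is the paper's Lemma \ref{lemjet}: writing $-\overline{G_2^{-1}}G_2=\Theta_2^{-1}\Lambda\overline{\Theta_2}$ via the Birkhoff factorization, every $\bm f$ in the kernel of $L_2$ satisfies $\Theta_2\bm f=\Lambda\overline{\Theta_2\bm f}$, so each component of $\Theta_2\bm f$ is a \emph{polynomial} of degree at most the corresponding partial index; invertibility of $\Theta_2(1)$ then transfers jet-injectivity back to the kernel, and $\ell_0$ is bounded by the Maslov index of $-\overline{G_2^{-1}}G_2$, which is where $6nd$ actually comes from (not from a ratio involving $m_*$ and the dimension bound $N$).

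The second gap is that the jet determination concerns the \emph{lifts} $\bm f=(f,\tilde f)\in Y^{M,d}$, whereas your Taylor estimate $H(f(\zeta))-f(\zeta)=O\bigl(|f(\zeta)|^{\ell_0+2}\bigr)$ controls only the base component. To conclude $H\circ f=f$ you must compare the $\ell_0$-jets at $\zeta=1$ of the full lifts, i.e.\ lift $H$ to the conormal bundle; since that lift involves the differential of $H$, it costs one order of jet --- this is precisely why the hypothesis is $j^{\ell_0+1}_0H=I$ rather than $j^{\ell_0}_0H=I$ --- and the paper handles it as in \cite{be-bl}. Two smaller omissions: Theorem \ref{theodiscs2} applies to $S$ only after the anisotropic rescaling $\Lambda_t$ of Lemma \ref{lemdil} (your appeal to $\mathcal F$ on $V\times U$ presupposes $r\in V$), and the open-cover claim is not automatic from ``enough tangent data at $\zeta=1$'': the paper proves it (Lemma \ref{lem:opencover}) by reparametrizing $f^v$ by M\"obius maps $\varphi_a$ fixing $1$ and computing that the center evaluation $(v,a)\mapsto f^v_a(0)$ has full rank exactly when $(g^v)'(0)\neq0$, which holds for generic admissible $v$.
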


Theorem~\ref{theo2} implies immediately 
Theorem~\ref{thm:main2}, which in conjunction with Lemma~\ref{lem:admissiblepoints} implies Theorem~\ref{thm:main}.
We will see how $\ell_0$ can be chosen in Lemma \ref{lemjet}. However, the intention of this paper is not to
give optimal bounds on the jet order needed for determination. This can be done better by considering purely formal 
constructions.

\begin{remark}
Assume that a jet determination result of order $k'$ holds in the formal setting, in the sense that every $\ell$-jet of a formal biholomorphisms which preserves a formal hypersurface  (up to the order $\ell$) and is trivial up to order $k'$ necessarily
coincides with the $\ell$-jet of the 
identity map. Then the conclusion of Theorem \ref{theo2} holds for $k'$-jet determination as long as the smoothness of $S$ is at least $\mathcal{C}^{\max\{k', d+\ell_0 + 4\}}$. Indeed, the $(\ell_0 + 1)$-order Taylor expansion of $H$ represents a $(\ell_0 + 1)$-order biholomorphism jet which preserves the polynomial hypersurface  induced by the Taylor expansion of $S$ up to order $(\ell_0 + 1)$, thus if it is trivial up to order $k'$ it must be trivial up to order $\ell_0+ 1$: from the theorem it follows in turn that $H$ is the identity.

It follows for instance that, for the version of Theorem \ref{theo2} in $\mathbb C^2$ (see Theorem 1.2 in \cite{be-de1}), we can always achieve $2$-jet determination of CR diffeomorphisms as in the real-analytic case (see \cite{eb-la-za,ko-me}). In higher dimension we can achieve the order of jet determination established in the formal setting, see for instance \cite{la-ju,la-mi} and 
for the model case \cite{ko-me2}.
\end{remark}
The proof of Theorem \ref{theo2} is achieved by putting together several facts, following the
approach taken in \cite{be-de1}:
\begin{itemize}
\item[1.] According to Proposition \ref{statinvar}, the family of $k_0$-stationary discs is invariant under CR diffeomorphisms.
\item[2.] By Lemma \ref{lemdil}, the pullback $r_t$ of the local defining function $r$ of $S$ under a suitable scaling method $\Lambda_t$ belongs to the neighborhood $V$ in Theorem \ref{theodiscs2}.
\item[3.] Similarly, the pullback $H_t = \Lambda_t^{-1} \circ H \circ \Lambda_t$ of the CR diffeomorphism $H$ can be made arbitrarily close to the identity (in the $\mathcal{C}^1$-norm) for $t$ small enough.
\item[4.] There exist an integer $\ell_0$, such that  the lifts of $k_0$-stationary discs attached to $r_t$ and passing through $0$ are determined by their $\ell_0$-jet at $1$.
\item[5.] The union of the images of $k_0$-stationary discs obtained in Theorem \ref{theodiscs2} is an open set of $\mathbb C^{n+1}$.
\end{itemize}

Similarly to Lemma \ref{lemdil} which is obtained exactly as Lemma $5.2$ in \cite{be-de1}, the point 3. is proved in 
the same way as Lemma 5.3 in \cite{be-de1}. To prove point 4., note that it is sufficient to show that the restriction of $\mathfrak j_{\ell_0}$ to the tangent space $T_{\bm {f^0}}\mathcal S^{k_0,\rho}$ of $\mathcal S^{k_0,\rho}$ at the point $\bm {f^0}=(f^0,\widetilde f^0)$ is injective: the statement then follows from Theorem \ref{theodiscs2}. Recall that here $\mathcal S^{k_0,\rho}$ denotes the set of lifts $\bm{f} \in Y^{M,d}$ (see (\ref{eqdefY2})) of $k_0$-stationary discs for the model hypersurface $\{\rho=0\}$ (see Definition \ref{defadm}). Since by the implicit function theorem $T_{\bm {f^0}}\mathcal S^{k_0,\rho}$ is kernel of the operator $\bm{f}'\mapsto 2\real \left[\overline{G(\zeta)}\bm{f}'\right]$ (see \ref{eqrh0}), the claim is a consequence of Lemma \ref{lemjet} proved in the next section. We will prove point 5. in Lemma \ref{lem:opencover}.

Finally, the proof of Theorem \ref{theo2} follows from the points above with the same argument as in Section 5.2 of \cite{be-de1}: the only difference is that one needs to apply the argument to the lift of $H_t$ to the conormal bundle rather than to $H_t$ itself, and this is achieved as in Section 4.2 \cite{be-bl}.

\subsection{Injectivity of the jet map} 
Let $\ell_0, m, N\in \mathbb N$. 
We want to consider the linear map $\mathfrak j_{\ell_0}: Y^{M,d} \to \mathbb C^{(2n+2)(\ell_0 + 1)}$ sending 
${\bm f}$ to its $\ell_0$-jet at $\zeta=1$
\[ \mathfrak j_{\ell_0}({\bm f})=\left ( {\bm f}(1), \partial {\bm f}(1), \ldots, \partial_{\ell_0} {\bm f}(1)\right )\in  \mathbb C^{N(\ell_0 + 1)} \]
where $\partial_\ell {\bm f}(1) \in \mathbb C^N$
denotes  the vector $\displaystyle \frac{\partial^\ell {\bm f}}{\partial \zeta^\ell}(1)$ for all $\ell=1,\cdots,\ell_0$. 
\begin{lemma}\label{lemjet}
There exists an integer $\ell_0\leq 6nd$ such that the restriction of 
$\mathfrak j_{\ell_0}$ to the kernel of the operator $\bm{f}'\mapsto 2\real \left[\overline{G(\zeta)}\bm{f}'\right]$ (see \ref{eqrh0}) is injective.
\end{lemma}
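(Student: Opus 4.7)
My plan is to exploit the Birkhoff-type decomposition underlying the proof of Theorem~\ref{theodiscs2} to show that every kernel element is essentially a polynomial of controlled degree, so that a sufficiently high jet at $\zeta = 1$ determines it.

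First, I would identify the kernel of $\bm{f}' \mapsto 2\real\bigl[\overline{G(\zeta)}\bm{f}'\bigr]$ on $Y^{M,d}$ with the kernel of the rescaled operator $L_2$ from \eqref{eqL2} acting on $(\mathcal{A}^{k,\alpha})^{2n+2}$: every kernel element $\bm{f}'$ decomposes uniquely as $h_i' = (1-\zeta)^{m_i} H_i$, $g' = (1-\zeta) G$, $\tilde h_i' = (1-\zeta)^{d - m_i} \tilde H_i$, $\tilde g' = \tilde G$, and the reduced vector $\bm{F} := (H_1,\ldots,H_n, G, \tilde H_1,\ldots,\tilde H_n, \tilde G)$ lies in $\ker L_2$. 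Since each extracted vanishing factor has order at most $d-1$, the hypothesis $\mathfrak{j}_{\ell_0}(\bm{f}') = 0$ translates into $\bm{F}$ vanishing at $\zeta = 1$ to order at least $\ell_0 - d + 1$; in other words, the $(\ell_0 - d)$-jet of $\bm{F}$ at $1$ is zero.

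Next, I would apply Theorem~2.1 of \cite{be-de2} to the Birkhoff factorization of $-\overline{G_2(\zeta)}^{-1} G_2(\zeta)$, obtaining a smooth map $\Theta \colon \overline{\Delta} \to \GL_{2n+2}(\mathbb{C})$, holomorphic on $\Delta$ and invertible on $\overline{\Delta}$, such that every $\bm{F} \in \ker L_2$ admits the unique representation $\Theta(\zeta)\bm{F}(\zeta) = \bm{P}(\zeta)$, where $\bm{P} = (P_1,\ldots,P_{2n+2})$ is a polynomial vector with $\deg P_i \leq \kappa_i$ (together with the partial-index symmetry conditions built into the spaces $\mathcal{R}_{\kappa_i}$), and $\kappa_1, \ldots, \kappa_{2n+2}$ are the partial indices. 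Since $\Theta(1)$ is invertible, the $q$-jets of $\bm{F}$ and of $\bm{P}$ at $\zeta = 1$ determine each other; for $q \geq \max_i \kappa_i$, vanishing of the $q$-jet of $\bm{P}$ forces $\bm{P} \equiv 0$, and hence $\bm{F} \equiv 0$ and $\bm{f}' \equiv 0$.

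To obtain the explicit bound $\ell_0 \leq 6nd$, I would estimate $\max_i \kappa_i$ as follows. The argument of Lemma~\ref{lemsurj}, adapted to the full operator $L_2$, yields $\kappa_i \geq 0$ for all $i$, so
\[
\max_i \kappa_i \;\leq\; \sum_i \kappa_i \;=\; \dim_{\mathbb R} \ker L_2 - (2n + 2) \;\leq\; 2(2n+1)k_0 - 2dn
\]
by the kernel-dimension estimate from Theorem~\ref{theodiscs2}. Using $k_0 \leq d - 1$ gives $\max_i \kappa_i \leq 2nd + 2d$, so setting $\ell_0 := \max_i \kappa_i + d \leq 2nd + 3d \leq 6nd$ (valid for $n \geq 1$) guarantees injectivity of the jet map. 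The main obstacle I anticipate is the bookkeeping: carefully tracing how the conjugation-symmetry conditions defining the spaces $\mathcal{R}_m$ behave through the row rescalings and column permutations of Section~\ref{rhp}, checking that $\bm{F}$ genuinely sits in $\ker L_2$ in the exact form required by Theorem~2.1 of \cite{be-de2}, and verifying that the Birkhoff factor $\Theta$ stays invertible at $\zeta = 1$ so that jet information passes between $\bm{F}$ and $\bm{P}$ without loss.
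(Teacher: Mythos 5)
Your proposal follows essentially the same route as the paper's proof: both use the Birkhoff factorization of $-\overline{G_2^{-1}}G_2$ (via Lemma 5.1 of \cite{gl1} / Theorem 2.1 of \cite{be-de2}) to show that $\Theta\bm{F}$ is a polynomial vector of componentwise degree bounded by the partial indices, transfer jet information through the invertible factor $\Theta(1)$, and bound $\max_i\kappa_i$ by the Maslov index (equivalently, by the kernel dimension) using nonnegativity of the partial indices. Your extra bookkeeping of the order-$d$ shift between the jet of $\bm{f}'\in Y^{M,d}$ and the jet of the reduced vector $\bm{F}\in\ker L_2$ is a point the paper glosses over, but it does not change the method or the final bound $\ell_0\leq 6nd$.
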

\begin{proof}
Following the notation of the proof of Theorem \ref{theodiscs2}, we prove that there exists an integer 
$\ell_0$ such that the restriction of $\mathfrak j_{\ell_0}$ to the kernel of $L_2$ (see (\ref{eqL2})) is injective.  
According to  Lemma 5.1 in \cite{gl1} we write  
\begin{equation*}
-\overline {G_2^{-1}} G_2 = \Theta_2^{-1} \Lambda \overline \Theta_2
\end{equation*}
where $\Theta_2: \overline{\Delta} \to GL_{2n+2}(\C)$ is a  smooth map holomorphic on $\Delta$, and $\Lambda$  is the diagonal matrix with entries $\zeta^{k_1}, \ldots, \zeta^{k_{2n+2}}$ where $k_1, \cdots k_{2n+2}$ are the partial indices of $\overline {G_2^{-1}} G_2$. Let ${\bm f} \in \ker L_1$. We can write 
$$
{\bm f}=-\overline {G_2^{-1}} G_2 \overline{{\bm f}}=\Theta_2^{-1} \Lambda \overline \Theta_2\overline{{\bm f}}
$$   
and therefore 
$$\Theta_2{\bm f}= \Lambda \overline{\Theta_2 {\bm f}}.$$
It follows that the $j^{th}$-component of $\Theta_2{\bm f}$ is a polynomial of degree at most $k_j$. Hence 
$\Theta_2{\bm f}$ is determined by its $\ell_0:=\max\{k_1,\cdots,k_{2n+2}\}$-jet at $1$. It remains to prove that    
 the restriction of $\mathfrak j_{\ell_0}$ to $\ker L_1$ is injective.  
Indeed, for any $\ell\geq 0$ we have
\[\partial_\ell(\Theta_2 {\bm f})(1) = \Theta_2(1) \partial_\ell {\bm f}(1) + R_{\ell - 1}\]
where $R$ is a linear function of the $(\ell - 1)$-jet of $f$ at $1$. It follows that the (well-defined) linear map $\Theta_{\ell_1}: \mathbb C^{(2n+2)(\ell_0 + 1)}\to  \mathbb C^{(2n+2)(\ell_0 + 1)}$ which sends the $\ell_0$-jet of ${\bm f}$ at $1$ to the $\ell_0$-jet of $\Theta_2 {\bm f}$ at $1$ has a block-triangular matrix representation whose $(2n+2)\times (2n+2)$ blocks in the diagonal are all equal to the non-singular matrix $\Theta_2(1)$. Therefore 
$\Theta_{\ell_2}$ is invertible, and the claim follows from the fact that $\mathfrak j_{\ell_0} \circ \Theta_2 = \Theta_{\ell_2} \circ \mathfrak j_{\ell_0}$ and that $\mathfrak j_{\ell_0} $ is injective on $\Theta_2(\ker L_1)$.
To conclude the proof we estimate $\ell_0$ by the Maslov index of $-\overline {G_2^{-1}} G_2$, namely 
\[\begin{aligned}
{\rm ind} \det\left(-\overline {G_2^{-1}} G_2\right) &= -2\sum_{i=1}^n m_i + 2 {\rm ind} Q +2k_0 \\
 &\leq 4n(2k_0 - d)) + \sum_{i=1}^n m_i+2k_0 \leq 6nd.
 \end{aligned}\]
\end{proof}

\subsection{An extended family of discs; covering of an open subset} 
\label{sub:a_special_family_of_stationary_discs}
We choose an allowable vector $v$ as described  subsection \ref{polmod}, 
and consider the disk $f^v = (h^v, g^v)$ associated with it. 
This disk is $k_0$-stationary, since
\[ \partial \rho \circ f^v = \left( P_{z_1} (h^v, \bar h^v), \dots , P_{z_n} (h^v, \bar h^v), -\frac{1}{2} \right), \]
and the degree in $\bar \zeta$ of each of the components is at most $k^0$; hence
$\zeta^{k_0} \partial \rho \circ f^v$ does extend holomorphically 
to $\Delta$. Consider, for every $a\in \Delta$, also 
the disk $f^v_a = f^v \circ \varphi_a$, where
\[ \varphi_a (\zeta) = \frac{1- \bar a}{1- a} \frac{\zeta - a}{1 - \bar a \zeta}.\]
This extended family of disks is 
useful, because we can compute the rank of its 
center evaluation map $(v,a)\mapsto C(v,a) = f^v_a (0) = (v, g^v_a (0)) $. By construction, 
the (real) Jacobian of this map at $(v,a) = (v,0)$ is  given by 
\[ \begin{aligned}
\det \begin{pmatrix}
	\dop{a}\big|_0 g^v_a (0) & \dop{\bar a}\big|_0 g^v_a (0) \\ \hspace{2mm} \\
	\dop{a}\big|_0 \overline{g^v_a (0)} & \dop{\bar a}\big|_0 \overline{g^v_a (0)} \\
\end{pmatrix}
&=\det \begin{pmatrix}
	(g^v)' (0) \dop{a}\big|_0 \varphi_a (0) & (g^v)' (0) \dop{\bar a}\big|_0 \varphi_a (0) \\ \hspace{2mm} \\
	\overline{(g^v)' (0)} \dop{a}\big|_0 \overline{\varphi_a (0)} & \overline{(g^v)' (0)} \dop{\bar a}\big|_0 \overline{\varphi_a (0)} \\
\end{pmatrix} \\ & = \det \begin{pmatrix}
	- (g^v)' (0)  &  0 \\ \hspace{2mm} \\
	0 & -  \overline{(g^v)' (0)} \\
\end{pmatrix} \\ & = |(g^v)' (0)|^2.
\end{aligned}  \]
We therefore have that the center evaluation map $(v,a) \mapsto C(v,a)$ is of 
full rank at $(v,a) = (v_0, 0)$ if and only if $(g^{v_0})' (0) \neq 0$. 

However, we can also {\em compute} $g^v$: $g^v$ is the holomorphic function
which satisfies $g^v (1) = 0$ and $\real g^v(\zeta) = P(h^v(\zeta), \overline{h^v (\zeta)})$
if $\zeta \bar \zeta =1$. Therefore, 
\[ \begin{aligned}
\real g^v(\zeta) &= \real \sum_{j=d-k_0}^{k_0} (1-\zeta)^j (1 - \bar \zeta)^{d-j} P^{j,d-j} (v, \bar v) \\ 
& = \real \sum_{j=d-k_0}^{k_0}  \left(\sum_{\ell} \binom{j}{\ell}\binom{d-j}{\ell} \right) P^{j,d-j} (v, \bar v)\\ & \mbox{ } \ \ +2 \real \sum_{j=d-k_0}^{k_0} \sum_{e=1}^{|d-2j|} (-1)^e \zeta^e \left(\sum_{\ell} \binom{j}{e + \ell}\binom{d-j}{\ell} \right) P^{j,d-j} (v, \bar v).
\end{aligned}
\]
From this equality it is easy to see that 
\[ (g^v)'(0) = -  2 \sum_{j=d-k_0}^{k_0}  \left(\sum_{\ell} \binom{j}{1 + \ell}\binom{d-j}{\ell} \right) P^{j,d-j} (v, \bar v).  \]
Hence, $(g^v)' (0) \neq 0$ for a dense, open subset of the $v$'s.

In particular, since the image of the model stationary disc $f^v$
is the same as the image of the $f^v_a$ for $a \in \Delta$, we have 
the following
\begin{lemma}
\label{lem:opencover} 
The set $\cup_v f^v (\Delta)$ contains an open subset of $\C^{n+1}$. 
\end{lemma}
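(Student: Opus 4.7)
The plan is to apply the inverse function theorem to the ``center evaluation map'' $\Phi(v,a) := f^v_a(0)$, viewed as a smooth map from a neighborhood of $(v_0, 0)$ in $\mathbb{C}^n \times \Delta$ to $\mathbb{C}^{n+1}$, where $v_0$ is a suitably chosen admissible vector. First, since $\varphi_a$ is an automorphism of $\overline \Delta$ and $\varphi_a(0) \in \Delta$, we have $\Phi(v,a) = f^v(\varphi_a(0)) \in f^v(\Delta)$, so the image of $\Phi$ is automatically contained in $\cup_v f^v(\Delta)$. Thus it suffices to exhibit a point at which $\Phi$ is a local diffeomorphism onto its image.

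Second, the Jacobian computation performed immediately before the lemma statement shows that the real Jacobian determinant of $\Phi$ at $(v_0, 0)$ equals $|(g^{v_0})'(0)|^2$. Indeed, the first $n$ complex coordinates of $\Phi$ are simply $v$, so the real Jacobian matrix is block-triangular with the upper-left $(2n) \times (2n)$ block equal to the identity and the lower-right $2 \times 2$ block given by the $(a, \bar a)$-derivatives of $g^v_a(0)$; the latter is diagonal with entries $-(g^v)'(0)$ and $-\overline{(g^v)'(0)}$ because $g^v$ is holomorphic and $\partial_a \varphi_a(0)|_{a=0} = -1$, $\partial_{\bar a} \varphi_a(0)|_{a=0} = 0$.

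Third, I would invoke the explicit formula derived just above, writing $(g^v)'(0)$ as a linear combination of the bihomogeneous components $P^{j,d-j}(v,\bar v)$ with nonzero combinatorial coefficients. These components are linearly independent as polynomials in $v, \bar v$ since they have distinct pure $z$-degree $j$, and by construction $P$ has a nonzero bihomogeneous summand in the relevant range $d - k_0 \leq j \leq k_0$. Hence $(g^v)'(0)$ is a nontrivial polynomial in $(v, \bar v)$, so its nonvanishing set is open and dense; intersecting with the open dense set of admissible vectors supplied by the discussion of Definition \ref{defadm1} (and Lemma \ref{lem:admissiblepoints}), I pick $v_0$ satisfying both conditions and apply the inverse function theorem to conclude that $\Phi$ maps a neighborhood of $(v_0, 0)$ diffeomorphically onto an open subset of $\mathbb{C}^{n+1}$, completing the proof.

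The only conceptually nontrivial step is verifying that $(g^v)'(0)$ is not identically zero, but since the relevant formula is already derived in the paragraphs preceding the lemma, no substantive obstacle remains; the lemma amounts to packaging those calculations through the inverse function theorem.
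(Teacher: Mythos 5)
Your argument is essentially the paper's own proof: the lemma is established there precisely by the center-evaluation-map computation in the paragraphs preceding the statement, namely that the Jacobian of $(v,a)\mapsto f^v_a(0)$ at $(v,0)$ equals $|(g^v)'(0)|^2$ and that the explicit formula for $(g^v)'(0)$ shows it is nonvanishing on a dense open set of $v$'s, after which the inverse function theorem gives the open set. Your extra remark justifying that the linear combination of the $P^{j,d-j}(v,\bar z)$-components is a nontrivial polynomial (distinct bidegrees, nonzero binomial coefficients) merely makes explicit a step the paper leaves implicit, so the two proofs coincide in substance.
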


\vskip 0,1cm
{\small
\noindent Florian Bertrand\\
Department of Mathematics\\
American University of Beirut, Beirut, Lebanon\\{\sl E-mail address}: fb31@aub.edu.lb\\

\noindent Giuseppe Della Sala \\
Department of Mathematics\\
American University of Beirut, Beirut, Lebanon\\{\sl E-mail address}: 	gd16@aub.edu.lb\\

\noindent Bernhard Lamel \\
Department of Mathematics, University of Vienna, Oskar-Morgenstern-Platz 1, Vienna, 1090, Austria\\
{\sl E-mail address}: 	bernhard.lamel@univie.ac.at\\

}

\end{document}